\documentclass[a4paper]{amsart}

\usepackage[alphabetic]{amsrefs}
\usepackage{braket,color,graphicx,tikz,hyperref,newtxtext,newtxmath}
\usepackage[headings]{fullpage}

\theoremstyle{plain}
\newtheorem{theorem}{Theorem}[section]
\newtheorem{lemma}[theorem]{Lemma}
\newtheorem{prop}[theorem]{Proposition}
\newtheorem{coro}[theorem]{Corollary}
\theoremstyle{definition}

\theoremstyle{remark}
\newtheorem{remark}[theorem]{Remark}
\numberwithin{equation}{section}

\newcommand{\abs}[1]{\lvert#1\rvert}

\newcommand{\Lr}[1]{\left(#1\right)}

\newcommand{\nm}[2]{\|\,#1\,\|_{#2}}

\newcommand{\na}{\nabla}

\usepackage{subfigure,comment}
\newtheorem{assumption}{Assumption}

\graphicspath{{Images/}}

\begin{document}
	\title[A concurrent global-local numerical method for multiscale parabolic equations]{A concurrent global-local numerical method for multiscale parabolic equations}
	\author[Y. Liao]{Yulei Liao}
	\address{Department of Mathematics, Faculty of Science, National University of Singapore, 10 Lower Kent Ridge Road, Singapore 119076, Singapore}
	\email{ylliao@nus.edu.sg}
	\author[Y. Liu\and P. Ming]{Yang Liu\and Pingbing Ming}
	\address{SKLMS, Institute of Computational Mathematics and Scientific/Engineering Computing, AMSS, Chinese Academy of Sciences, Beijing 100190, China}
	\address{School of Mathematical Sciences, University of Chinese Academy of Sciences, Beijing 100049, China}
	\email{liuyang2020@lsec.cc.ac.cn, mpb@lsec.cc.ac.cn}
	\thanks{This work is supported by funds from the National Key R\&D Program of China (\# 2024YFA1012502) and the National Natural Science Foundation of China (NSFC: \# 12371438).}
\keywords{Concurrent global-local method; Heterogeneous multiscale method; Multiscale parabolic equation; Local error estimate}
	\date{\today}
	\subjclass[2020]{35B27, 35K10, 65M12, 65M60}
	
	\begin{abstract}
This paper presents a concurrent global-local numerical method for solving multiscale parabolic equations in divergence form. The proposed method employs hybrid coefficient to provide accurate macroscopic information while preserving essential microscopic details within specified local defects. Both the macroscopic and microscopic errors have been improved compared to existing results, eliminating the factor of $\Delta t^{-1/2}$ when the diffusion coefficient is time-independent. Numerical experiments demonstrate that the proposed method effectively captures both global and local solution behaviors.
\end{abstract}
\maketitle
\section{Introduction}\label{sec:intro}
Multiscale problems are pervasive in various scientific and engineering disciplines, characterized by phenomena occurring at multiple spatial and temporal scales~\cite{fish2009multiscale}. Within this broad field, multiscale parabolic equations are fundamental in modeling diffusion processes in heterogeneous media. These equations often involve diffusion coefficients that vary significantly across different regions, posing additional difficulties for numerical simulations. In the present work, we focus on a multiscale parabolic equation in divergence form with Dirichlet boundary condition:
	\begin{equation}
	\left\{
		\begin{aligned}
			\partial_t u^\varepsilon(x,t)-\nabla\cdot(a^\varepsilon(x)\nabla u^\varepsilon(x,t))&=f(x,t), &&(x,t)\in D\times(0,T)=:D_T, \\
			u^\varepsilon(x,0)&=u_0(x), && (x,t)\in D\times\{0\}, \\
			u^\varepsilon(x,t)&=0, &&(x,t)\in \partial D\times (0,T),
		\end{aligned}\right.
	\end{equation}
where $D\subset \mathbb{R}^n$ is a convex bounded domain and $\varepsilon>0$ is a small parameter that signifies the multiscale nature of the problem. The source term $f\in L^2(0,T;H^{-1}(D))$ and initial value $u_0\in H_0^1(D)$. The time-independent rough diffusion coefficient $a^\varepsilon(x)$ lies in $\mathcal{M}(\lambda, \Lambda; D)$ for some $\lambda$, $\Lambda>0$, where
	\[
	\begin{aligned}
		\mathcal{M}(\lambda, \Lambda ; D):=\Big\{a \in\left[L^{\infty}(D)\right]^{n \times n} &\mid\xi \cdot a(x) \xi \geq \lambda|\xi|^2, \xi \cdot a(x) \xi\ge \frac{1}{\Lambda}|a(x) \xi|^2 \\
		& \text { for any } \xi \in \mathbb{R}^n \text { and a.e. } x\in D\Big\}.
	\end{aligned}
	\]

On the analytic level, the homogenization theory, particularly H-convergence, provides a powerful framework for approximating the behavior of such multiscale systems by averaging the effects of small-scale variations. For a sequence $\{a^\varepsilon(x)\}_{\varepsilon>0} \in \mathcal{M}(\lambda, \Lambda ; D)$, there exists a subsequence (still denoted by $\{a^\varepsilon(x)\}_{\varepsilon>0}$ for convenience) and $A(x) \in\mathcal{M}(\lambda, \Lambda ; D)$ such that $a^\varepsilon(x)$ H-converges to $A(x)$ \cite{tartar_general_2010}. Set $u(x,t)$ the solution to the corresponding homogenization problem
\begin{equation}\label{eq:u}\left\{
	\begin{aligned}
		\partial_t u(x,t)-\nabla\cdot(A(x)\nabla u(x,t))&=f(x,t), &&(x,t)\in D_T, \\
		u(x,0)&=u_0(x), && (x,t)\in D\times\{0\}, \\
		u(x,t)&=0, &&(x,t)\in \partial D\times (0,T).
	\end{aligned}\right. 
	\end{equation}
According to~\cite{bensoussan_asymptotic_2011, 1992Correctors}, the solutions $u^{\varepsilon}$ and $u$ exist in $L^2(0, T ; H_0^1(D))$, and satisfy
\[
\left\{\begin{aligned}
		u^{\varepsilon} &\rightharpoonup u &&\text {weakly in } L^2(0, T ; H_0^1(D)),\\
		a^\varepsilon\nabla u^{\varepsilon} &\rightharpoonup A\nabla u &&\text {weakly in } [L^2(0, T; L^2(D))]^d,\\
		\partial_t u^{\varepsilon} &\rightharpoonup \partial_t u &&\text {weakly in }L^2(0, T ; H^{-1}(D)).
	\end{aligned}\right.
\]
	
Numerous numerical methods founded on the idea of homogenization have been suggested and extensively examined in existing studies such as multiscale finite element method~\cite{MultiscaleFiniteElement1997} and heterogeneous multiscale method (HMM)~\cite{eHeterognousMultiscaleMethods2003, AbdulleE:2003,chenHeterogeneousMultiscaleMethod2005}. HMM aims to capture the macroscopic behavior of the equation without microscopic details. It consists of the macroscopic solver to approximate the homogenization solution $u$, which rely on the effective matrix $A$ and the microscopic solver to approximate this effective matrix. Review on HMM may be found in~\cite{Engquist:2007} and~\cite{Abdulle:2012}. 

In this work, we shall employ an HMM type method to capture the microscopic information in a small defect $K_0\subset D$ and macroscopic information outside the region. For such scenario, there are two types of numerical methods. The first one is based on domain decomposition~\cite{apoung2007numerical,hu2000gradient,gervasio2001heterogeneous,glowinski2004multi,lozinski2011numerical}, particularly the Arlequin method~\cite{Dhia:2005arlequin,Albella:2019mathematical,Gorynina:2021some}. These methods apply different operators to different subdomains of the computational domain. The second one is the global-local method~\cite{oden2000estimation, oden2001estimation}, which compute the homogenized solution $u$ firstly, and then exploit it as the boundary condition to approximate the multiscale solution $u^\varepsilon$ on a local region. 
Following~\cite{huang_concurrent_2018,mingNitscheHybridMultiscale2022} several concurrent global-local methods have been proposed for multiscale elliptic equations with matching and non-matching grids, respectively. The key idea is to hybridize the microscopic and macroscopic coefficients by a function as $b^\varepsilon(x)=\rho(x) a^\varepsilon(x)+(1-\rho(x))A(x)$, where the transition function $\rho=1$ in $K_0$ and vanishes away from $K_0$ without any smoothness requirement.
	
In this work, we extend the concurrent global-local method proposed in~\cite{huang_concurrent_2018} to the multiscale parabolic problems. Specifically, we plan to study the microscopic information in a subdomain $K_0 \subset D$, with $\text{diam }K_0=d$. We slightly extend $K_0$, i.e., set $K_0\subset K$, with $\text{dist}(K_0,\partial K\backslash \partial D)=cd,$ where $c$ is a positive universal constant. The hybrid coefficient $b^\varepsilon(x)=\rho(x) a^\varepsilon(x)+(1-\rho(x))A(x)$, where $\rho(x)$ is a transition function which satisfies 
\[
\left\{
\begin{aligned}
	\rho(x)&=1, && x\in K_0,\\
	0\leq\rho(x)&\leq 1, && x\in K,\\	
	\rho(x)&=0, && x\in D\backslash K.
\end{aligned}
\right.
\]
An interesting finding in Theorem~\ref{thm:local} suggests that the pollution of the local energy error may be reduced if we take $\rho$ as the indicator function of $K$.

We use the backward Euler scheme as the temporal discretization with a uniform time step $\Delta t$ and the Lagrange linear finite element to approximate the solution space with local mesh size $h$ over $K$ and global mesh size $H$ over $D$. Specifically, let $\mathcal{T}_{h,H}$ be a shape-regular triangulation of $D$ in the sense of~\cite{ciarlet_finite_2002}, which satisfies 
\[
h=\max_{T\in\mathcal{T}_{h,H},T\cap K\neq \emptyset} h_T \quad\text{and}\quad H=\max_{T\in\mathcal{T}_{h,H}} h_T,
\]
and the Lagrange linear finite element space is defined as
\[
X_{h,H}:= \{v\in H_0^1(D)\mid v|_T\in \mathbb{P}_1(T), \quad\text{for all}\quad T\in \mathcal{T}_{h,H}\}.  
\]
Generally we do not have the information of the effective matrix $A$. In practice, the hybrid coefficient $b_H^\varepsilon(x)=\rho(x) a^\varepsilon(x)+(1-\rho(x))A_H(x)$, where $A_H$ is the approximation of $A$ obtained by HMM or other types of numerical homogenization methods. In the framework of HMM, we solve several cell problems over cubes with length $\delta_0$ to approximate $A_H$, and use $e(\mathrm{HMM})$ measuring error caused by such approximation; e.g. for local periodic $a^\varepsilon(x)=a(x,x/\varepsilon)$ with $a(x,y)$ periodic in $y$, it is consistent with the elliptic cases~\cite{Ming:2005,Du:2010} that $e(\mathrm{HMM})$ is $\mathcal{O}(\delta_0+\varepsilon/\delta_0)$.  Denote $t_k=k\Delta t$, $f_k(x)=f(x,t_k)$, $u_k(x)=u(x,t_k)$ and $u^\varepsilon_k(x)=u^\varepsilon(x,t_k)$. 
To numerically solve the equation, we find $U_k\in X_{h,H},k=1,\dots,m$ such that $t_m=T$ and
\begin{equation}
	\label{test}
	(\delta U_k,V)_D +(b^\varepsilon_H\nabla U_k,\nabla V)_D=( f_k,V)_D\qquad\text{for all}\quad V\in X_{h,H},
\end{equation}
where $\delta U_k=(U_k-U_{k-1})/\Delta t$, and the initial value $U_0\in X_{h,H}$ is the solution of the following variational problem such that
\begin{equation}\label{eq:U0}
(b^\varepsilon_H\nabla U_0,\nabla V)_D=(b^\varepsilon_H\nabla u_0,\nabla V)_D\qquad\text{for all}\quad V\in X_{h,H}.
\end{equation}

To judge whether $U_m$ can successfully capture the global information $u$ over $D$ and the local information $u^\varepsilon$ over the local region $K_0$, we shall analyze this concurrent scheme. Unlike the multiscale elliptic equations~\cite{huang_concurrent_2018}, time evolution presents further analytical difficulties. As shown in~\cite{mingAnalysisHeterogeneousMultiscale2007,Eckhardt:2023fully}, the HMM approximates the homogenization solution $u$ for multiscale parabolic equations with the $H^1$-norm error estimate $\mathcal{O}(\Delta t+H+e(\mathrm{HMM})\Delta t^{-1/2})$. The factor $\Delta t^{-1/2}$ is frustrating since it indicates that the time step can not be arbitrarily small. In Theorem~\ref{thm:global}, we eliminate this factor for time-independent coefficient $a^\varepsilon$, and the global $H^1$-error reads as
\[
\nm{u_m-U_m}{H^1(D)}\le C(\lambda,\Lambda,T,u)(\Delta t+H+\eta(K)+e(\mathrm{HMM})),
\]
where $\eta(K)$ measures the pollution caused by the local region $K$, which also appears in multiscale elliptic equations~\cite{huang_concurrent_2018}. We achieve this improvement with the aid of a Galerkin projection operator~\eqref{eq:glocalProj} onto the finite element space for an auxiliary elliptic problem, not the projection for original parabolic equations~\eqref{eq:u}  in~\cite{mingAnalysisHeterogeneousMultiscale2007,Eckhardt:2023fully}.

Since we only concentrate on the microscopic information on a local region $K_0$, we expect the convergence of this concurrent scheme to the multiscale solution $u^\varepsilon$ in an interior energy norm. Studies for interior energy error estimates can be traced back to~\cite{nitsche_interior_1974,Bramble:1975maximum} for approximating second order elliptic equations on quasi-uniform meshes. Later, similar results have been proven on shape-regular meshes in~\cite{demlow_local_2011} using a novel superapproximation result. They found that the interior $H^1$-error may be bounded by a local approximation term plus a global pollution term, like $\mathcal{O}(h+H^2/d)$. These results are useful to analyze several local and parallel algorithms such as two-grid methods for elliptic type equations~\cite{jinchao2001local,Wang:2023local}. 

For parabolic equations, we discuss interior energy errors in space $L^2(0,T;H^1(K_0))$, or the discrete corresponding errors in space $\ell^2(0,T;H^1(K_0))$, with
$$
\|f\|_{L^p(0, T; X)}^p:=\int_0^T\|f(\cdot, t)\|_X^p d t \quad \text { and } \quad\|f\|_{\ell^p(0, T; X)}^p:=\Delta t \sum_{k=1}^m\|f_k\|_X^p,
$$
for a Banach space $X$.
Thom{\'e}e et al.~\cite{Bramble:1977some,Thomee:1979some,Schatz:1998stability} have extended Nitsche's work~\cite{nitsche_interior_1974,Bramble:1975maximum} for parabolic equations with semi-discrete schemes.  Unfortunately, these works are not suitable for the multiscale problems, because they require the smoothness of the coefficient $a^\varepsilon$ and the solution $u^\varepsilon$.
We also note that the local $H^1$-error for the two-grid algorithm in~\cite{jinchao2001local} for transient Stokes equations~\cite{shang_local_2010} reads as $\mathcal{O}(\Delta t^{-1/2}(\Delta t+h+H^2))$. All these works hangs on  the interior energy stability as a bridge. We follow another way to derive the interior energy estimate by considering $\partial_t u^\varepsilon$ as an extra source term of the elliptic equation; see Theorem~\ref{thm:local} and Remark~\ref{rmk:local} that
\begin{align*}
\nm{\nabla(u^\varepsilon-U)}{\ell^2(0,T;L^2(K_0))}&\le C(\lambda,\Lambda,T,u^\varepsilon,u)\Bigl(\inf_{V\in[X_{h,H}]^m}\nm{\nabla(u^\varepsilon-V)}{L^2(K)}\\
&\qquad+d^{-1}(|K|^{1/2-1/p}+\Delta t+H^2+\eta(K)+e(\mathrm{HMM}))\Bigr),
\end{align*}
with Meyer's regularity exponent $p>2$~\cite{bensoussan_asymptotic_2011}. Hence, only the Meyer's regularity for $u^\varepsilon$ required and we eliminate the frustrating factor $\Delta t^{-1/2}$ for the interior energy error over $K_0$.

The rest of the paper is organized as follows. In Section \ref{sec:Hconver}, we study the H-limit of the mixing of the general time-dependent hybrid coefficient for parabolic equations, which may be of independent interest. In Section \ref{sec:error}, we present the accuracy for retrieving the macroscopic and microscopic information, which are the main technique part of this work. In Section \ref{sec:numer}, we provide several numerical experiments which are adopted from~\cite{huang_concurrent_2018,mingNitscheHybridMultiscale2022}, fitted theoretical predictions. Finally, Section \ref{sec:concl} concludes the paper with a summary and potential future research directions. 

Throughout this paper, we shall use standard Sobolev spaces $W^{m,p}(\Omega)$ for any bounded domain $\Omega\subset\mathbb R^n$. Without further explanation is provided, the constant $C$ only depends on $\lambda,\Lambda$, the end time $T$ and the domain $D$, and is independent of $\varepsilon$, the diam of the local region $d$, time step $\Delta t$ and the mesh sizes $h,H$.

\section{H-convergence for parabolic equations}\label{sec:Hconver}
Before analyzing the convergence of the method, we firstly examine the homogenization of the mixing of coefficients, following the elliptic case in~\cite[\S~2]{huang_concurrent_2018}. Under H-convergence, a time-independent coefficient yields the same effective limit as in the elliptic problem, which suffices for our purposes. Here, we slightly extend the setting and study the case of time-dependent coefficients. We keep the notation unchanged, as this does not constitute an abuse of notation and should not cause confusion. Specifically, if $a^\varepsilon(x,t)\in \mathcal{M}(\lambda,\Lambda;D_T)$, where $D_T=D\times[0,T]$ and
\[
\begin{aligned}
	\mathcal{M}(\lambda, \Lambda ; D_T):=\Set{a \in\left[L^{\infty}(D_T)\right]^{n \times n}|
	a(\cdot,t)\in \mathcal{M}(\lambda, \Lambda ; D)  \text { for all } t\in [0,T]},
\end{aligned}
\]
then by H-convergence theory, there exists a subsequence of the sequence $a^\varepsilon(x,t)$ that H-converge \cite{zhikovGconvergenceParabolicOperators1981,dallaglioCorrectorResultConverging1997} to certain $A(x,t)\in\mathcal{M}(\lambda,\Lambda;D_T)$. In continuous sense, let the hybrid coefficient $b^\varepsilon(x,t)=\rho(x,t) a^\varepsilon(x,t)+(1-\rho(x,t))A(x,t)$ for some cut-off function $\rho(x,t):\Omega\times[0,T]\to[0,1]$. Similarly, $b^\varepsilon(x,t)\in \mathcal{M}(\lambda,\Lambda;D_T)$ and there exists a subsequence of $b^\varepsilon$ that H-converge to certain $B(x,t)\in\mathcal{M}(\lambda,\Lambda;D_T)$.
 
In this part, we measure the difference between $A(x,t)$ and $B(x,t)$. For the sake of convenience, we still denote the subsequences by  $a^\varepsilon(x,t)$ and $b^\varepsilon(x,t)$. Denote the $L^2(D_T)$ inner product by $(\cdot,\cdot)_{D_T}$, and the duality pairing between $L^2(0,T;H^{-1}(D))$ and $L^2(0,T;H_0^1(D))$ by $\langle \cdot,\cdot\rangle_{D_T}$. 
\begin{prop}
	There holds
	\begin{equation}
		\label{Hconverge}
		|A(x,t)-B(x,t)| \leq 2 \Lambda(\Lambda / \lambda+\sqrt{\Lambda / \lambda}) \rho(x,t)(1-\rho(x,t)) \text { a.e. } (x,t) \in D_T \text {. }
	\end{equation}
Here $|\cdot|$ is the $\ell^2$-norm of a matrix.
\end{prop}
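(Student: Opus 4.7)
The plan is to mirror the elliptic argument of Huang--Ming--Yu (the reference~\cite{huang_concurrent_2018} that the authors point to for \S~2), now placed inside the parabolic H-convergence framework of Zhikov and Dall'Aglio, and to exploit the algebraic decomposition $b^\varepsilon - A = \rho(a^\varepsilon - A)$ together with $b^\varepsilon - a^\varepsilon = -(1-\rho)(a^\varepsilon - A)$; this pair of identities is precisely the source of the $\rho(1-\rho)$ factor on the right-hand side of~\eqref{Hconverge}.

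First I would construct two families of oscillating test functions. For each $\xi\in\mathbb{R}^n$ and each smooth cut-off $\varphi\in C_c^\infty(D_T)$, parabolic H-convergence supplies a sequence $v^\varepsilon_\xi$ with $v^\varepsilon_\xi\rightharpoonup \xi\cdot x\,\varphi$ in $L^2(0,T;H^1(D))$, with $a^\varepsilon\nabla v^\varepsilon_\xi\rightharpoonup A\nabla(\xi\cdot x\,\varphi)$ in $[L^2(D_T)]^n$, and with $\partial_t v^\varepsilon_\xi-\nabla\cdot(a^\varepsilon\nabla v^\varepsilon_\xi)$ relatively compact in $L^2(0,T;H^{-1}(D))$. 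In parallel I would take an analogous sequence $w^\varepsilon_\eta$ for the operator $b^\varepsilon$ and its H-limit $B$, for an arbitrary second direction $\eta\in\mathbb{R}^n$.

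Next I would apply a parabolic version of the div--curl (compensated compactness) lemma to identify the weak limits of the crossed products, namely $(a^\varepsilon\nabla v^\varepsilon_\xi)\cdot\nabla w^\varepsilon_\eta \rightharpoonup (A\xi)\cdot\eta$ and $(b^\varepsilon\nabla w^\varepsilon_\eta)\cdot\nabla v^\varepsilon_\xi\rightharpoonup (B\eta)\cdot\xi$ as space-time distributions. Subtracting the two limits and using $b^\varepsilon-a^\varepsilon=-(1-\rho)(a^\varepsilon-A)$, the difference $((A-B)\xi)\cdot\eta$ localizes against $\varphi^2(1-\rho)$, while starting instead from $b^\varepsilon-A=\rho(a^\varepsilon-A)$ produces a matching $\rho$ factor. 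Combining the two representations forces the $\rho(1-\rho)$ structure.

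The technical heart is the quantitative constant $2\Lambda(\Lambda/\lambda+\sqrt{\Lambda/\lambda})$. Here I would invoke the two defining conditions of $\mathcal{M}(\lambda,\Lambda)$: coercivity $\xi\cdot a\xi\geq\lambda|\xi|^2$ and $|a\xi|^2\leq\Lambda\,\xi\cdot a\xi$. Applying Cauchy--Schwarz in the weighted $a^\varepsilon$-inner product to the residual $\nabla(v^\varepsilon_\xi-w^\varepsilon_\xi)$ yields the ratio $\Lambda/\lambda$, while a cross term handled by Young's inequality contributes the additional $\sqrt{\Lambda/\lambda}$. Since $\xi,\eta$ are arbitrary and $\varphi$ can be concentrated at any Lebesgue point of the coefficients, varying $\xi,\eta$ over the unit sphere converts the distributional bound into the pointwise estimate on $|A(x,t)-B(x,t)|$ as claimed.

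I expect the main obstacle to be the parabolic div--curl lemma itself: unlike the purely elliptic setting of~\cite{huang_concurrent_2018}, the time derivative must be controlled jointly with the spatial gradients in order to justify passing to the limit in the products of two weakly convergent sequences. I would deal with this by invoking Zhikov's space--time compensated compactness for parabolic operators~\cite{zhikovGconvergenceParabolicOperators1981}, using the relative compactness of $\partial_t v^\varepsilon_\xi-\nabla\cdot(a^\varepsilon\nabla v^\varepsilon_\xi)$ in $L^2(0,T;H^{-1}(D))$ as the compensating structure. Once this compensated compactness step is secured, the remainder of the argument runs in close analogy with the elliptic proof.
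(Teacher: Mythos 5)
Your proposal follows essentially the same route as the paper: the two algebraic decompositions $b^\varepsilon-a^\varepsilon=-(1-\rho)(a^\varepsilon-A)$ and $b^\varepsilon-A=\rho(a^\varepsilon-A)$ yield separate pointwise bounds proportional to $(1-\rho)$ and to $\rho$ (with constants $2\Lambda^2/\lambda$ and $2\Lambda\sqrt{\Lambda/\lambda}$, coming from exactly the Young-plus-energy argument you describe), and these are combined to produce the $\rho(1-\rho)$ factor --- which is precisely the paper's proof. The only difference is in how the limit of the crossed products is justified: the paper tests a forward problem for $a^\varepsilon$ against a backward-in-time adjoint problem for $(b^\varepsilon)^\top$ so that the time-derivative terms cancel after integration by parts, whereas you invoke Zhikov's parabolic div--curl lemma with affine correctors; both are standard implementations of the same compensated-compactness step.
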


The proof is essentially a combination of  ~\cite[Theorem 1]{antonicParabolicHconvergenceSmallamplitude2009} and \cite[Proposition 2.1]{huang_concurrent_2018}. 

\begin{proof}
Define 
	\[
	\mathcal{W}:=\Set{v\in L^2(0,T;H_0^1(D))|\partial_t v\in L^2(0,T;H^{-1}(D))},
	\]
and subspaces
	\[
	\mathcal{W}_0:=\Set{v\in \mathcal{W}|v(x,0)=0},\qquad\mathcal{W}_T:=\Set{v\in \mathcal{W}|v(x,T)=0}.\]
	For given $\phi_0 \in 
	\mathcal{W}_0$ and $\psi_0\in\mathcal{W}_T$, we define $\phi^\varepsilon\in\mathcal{W}_0$ as the unique solution of the initial-boundary value problem
	\begin{equation}
		\label{Hcon1}
		\left\{
		\begin{aligned}
			\partial_t \phi^\varepsilon-\nabla\cdot(a^\varepsilon \nabla \phi^\varepsilon)&=\partial_t \phi_0-\nabla\cdot(A\nabla \phi_0),\quad \text{in~} D_T, \\
	\phi^\varepsilon&\in\mathcal{W}_0,
		\end{aligned}\right.
	\end{equation}
and $\psi^\varepsilon\in\mathcal{W}_T$ as the unique solution of the backward-in-time problem
\begin{equation}
		\label{Hcon2}
		\left\{
		\begin{aligned}
			-\partial_t \psi^\varepsilon-\nabla\cdot((b^\varepsilon)^{\top} \nabla \psi^\varepsilon)&=-\partial_t \psi_0-\nabla\cdot(B^\top\nabla \psi_0), \quad\text{in~} D_T, \\			\psi^\varepsilon&\in\mathcal{W}_T.
		\end{aligned}\right.
	\end{equation}
Here $(b^\varepsilon)^{\top}$ is the transpose of the matrix $b^\varepsilon$. By H-convergence theory, we have
	\begin{align*}
	\phi^{\varepsilon} \rightharpoonup \phi_0,\quad
	&\psi^{\varepsilon} \rightharpoonup \psi_0
	 &&\text {weakly in } L^2(0, T ; H_0^1(D)),\\
	a^\varepsilon\nabla \phi^{\varepsilon} \rightharpoonup A\nabla \phi_0 ,\quad
	&(b^\varepsilon)^\top\nabla \psi^{\varepsilon} \rightharpoonup B^\top\nabla \psi_0
	&&\text {weakly in } [L^2(0, T; L^2(D))]^d,\\
	\partial_t \phi^{\varepsilon} \rightharpoonup \partial_t \phi_0 ,\quad
	&\partial_t \psi^{\varepsilon} \rightharpoonup \partial_t \psi_0
	 &&\text {weakly in }L^2(0, T ; H^{-1}(D)).
\end{align*}
	
After multiplying the above equations by $\varphi \psi^\varepsilon$ and $\varphi \phi^\varepsilon$ respectively, for some $\varphi \in \mathrm{C}_c^{\infty}(D_T)$, we obtain
\[
	\left\langle\partial_t \phi^\varepsilon, \varphi \psi^\varepsilon\right\rangle_{D_T}+(a^\varepsilon \nabla \phi^\varepsilon, \nabla\left(\varphi \psi^\varepsilon\right))_{D_T}=\left\langle\partial_t \phi_0, \varphi \psi^\varepsilon\right\rangle_{D_T}+(A \nabla \phi_0, \nabla(\varphi \psi^\varepsilon))_{D_T}
\]
and
	$$
	-\left\langle\partial_t \psi^\varepsilon, \varphi \phi^\varepsilon\right\rangle_{D_T}+ ((b^\varepsilon)^\top \nabla \psi^\varepsilon, \nabla\left(\varphi \phi^\varepsilon\right))_{D_T}=-\left\langle\partial_t \psi_0, \varphi \phi^\varepsilon\right\rangle_{D_T}+(B^\top \nabla \psi_0, \nabla\left(\varphi \phi^\varepsilon\right))_{D_T} .
	$$
	By subtracting these equalities, after using the identity $\left\langle\partial_t \phi^\varepsilon, \varphi\psi^\varepsilon\right\rangle_{D_T}=-\langle\psi^\varepsilon\partial_t\varphi+\varphi\partial_t\psi^\varepsilon,\phi^\varepsilon\rangle_{D_T}$ and $\left\langle\partial_t \phi_0, \varphi\psi^\varepsilon\right\rangle_{D_T}=-\left\langle\psi^\varepsilon\partial_t\varphi+\varphi\partial_t\psi^\varepsilon,\phi_0\right\rangle_{D_T}$ we have
	\begin{align*}
		& -(\phi^\varepsilon,\psi^\varepsilon \partial_t \varphi)_{D_T}+(a^\varepsilon \nabla \phi^\varepsilon ,\psi^\varepsilon \nabla \varphi+\varphi \nabla \psi^\varepsilon)_{D_T}
		-((b^\varepsilon)^\top \nabla \psi^\varepsilon,\phi^\varepsilon \nabla \varphi+\varphi \nabla \phi^\varepsilon)_{D_T}  \\
		& \qquad=\left\langle\partial_t \psi_0, \varphi \phi^\varepsilon\right\rangle_{D_T}-\left\langle\partial_t \psi^\varepsilon, \varphi \phi_0\right\rangle_{D_T}-(\phi_0,\psi^\varepsilon \partial_t \varphi)_{D_T}\\
		&\qquad\qquad+ (A \nabla \phi_0 , \psi^\varepsilon \nabla \varphi+\varphi \nabla \psi^\varepsilon )_{D_T}- (B^\top \nabla \psi_0,\phi^\varepsilon \nabla \varphi+\varphi \nabla \phi^\varepsilon)_{D_T}.
	\end{align*}
	Now we can pass to the limit as $\varepsilon\rightarrow 0$; the only terms left after the cancellation are
	$$
	\lim_{\varepsilon\rightarrow 0}(\varphi(a^\varepsilon-b^\varepsilon)\nabla\phi^\varepsilon,\nabla \psi^\varepsilon)_{D_T}=(\varphi(A-B)\nabla \phi_0, \nabla \psi_0)_{D_T}.
	$$
	
	Let $\varphi \geq 0$, and we define
	$$
	X:=\lim _{\varepsilon \rightarrow 0}\left(\varphi\left(b^{\varepsilon}-a^{\varepsilon}\right) \nabla \phi^{\varepsilon}, \nabla \psi^{\varepsilon}\right)_{D_T}=(\varphi(B-A)\nabla \phi_0, \nabla \psi_0)_{D_T}.
	$$
	It is clear that
	\[
		X  \leq \limsup _{\varepsilon \rightarrow 0}\left(\varphi(1-\rho)\left|\left(A-a^{\varepsilon}\right) \nabla \phi^{\varepsilon}\right|,\left|\nabla \psi^{\varepsilon}\right|\right)_{D_T} \leq 2 \Lambda \limsup _{\varepsilon \rightarrow 0}\left(\varphi(1-\rho)\left|\nabla \phi^{\varepsilon}\right|,\left|\nabla \psi^{\varepsilon}\right|\right)_{D_T}.
	\]
	For any $\alpha>0$, we bound $X$ as
	
	\begin{equation}\label{eq:X}
	\begin{aligned}
		X & \leq \frac{2 \Lambda}{\lambda}\left(\alpha \lambda \limsup _{\varepsilon \rightarrow 0}(\varphi(1-\rho),\left|\nabla \phi^{\varepsilon}\right|^2)_{D_T}+\frac{\lambda}{4 \alpha} \limsup _{\varepsilon \rightarrow 0}(\varphi(1-\rho),\left|\nabla \psi^{\varepsilon}\right|^2)_{D_T}\right) \\
		& \leq \frac{2 \Lambda}{\lambda}\left(\alpha \limsup _{\varepsilon \rightarrow 0}\left(\varphi(1-\rho) a^{\varepsilon} \nabla \phi^{\varepsilon}, \nabla \phi^{\varepsilon}\right)_{D_T}+\frac{1}{4 \alpha} \limsup _{\varepsilon \rightarrow 0}\left(\varphi(1-\rho) b^{\varepsilon} \nabla \psi^{\varepsilon}, \nabla \psi^{\varepsilon}\right)_{D_T}\right). \\
		\end{aligned}
		\end{equation}
	After multiplying \eqref{Hcon1} by $(1-\rho)\varphi \phi^\varepsilon$, we obtain
	\begin{align*}
	\left\langle\partial_t \phi^\varepsilon, (1-\rho)\varphi \phi^\varepsilon\right\rangle_{D_T}+(a^\varepsilon \nabla \phi^\varepsilon, \nabla[(1-\rho)\varphi \phi^\varepsilon])_{D_T}&=\left\langle\partial_t \phi_0, (1-\rho)\varphi \phi^\varepsilon\right\rangle_{D_T}\\
	&\qquad+(A \nabla \phi_0, \nabla[(1-\rho)\varphi \phi^\varepsilon])_{D_T}.
	\end{align*}
	We pass to the limit as $\varepsilon\rightarrow 0$, and obtain
	$$
	\lim_{\varepsilon \rightarrow 0}(\varphi(1-\rho) a^{\varepsilon} \nabla \phi^{\varepsilon}, \nabla \phi^{\varepsilon})_{D_T}=(\varphi(1-\rho) A \nabla \phi_0, \nabla \phi_0)_{D_T}.
	$$
	Similarly, we obtain
		$$
	\lim_{\varepsilon \rightarrow 0}(\varphi(1-\rho) b^{\varepsilon} \nabla \psi^{\varepsilon}, \nabla \psi^{\varepsilon})_{D_T}=(\varphi(1-\rho) B \nabla \psi_0, \nabla \psi_0)_{D_T}.
	$$
	
	Substituting above two limits into~\eqref{eq:X}, we obtain
		$$
		\begin{aligned}
		X & \leq \frac{2 \Lambda}{\lambda}\left(\alpha\left(\varphi(1-\rho) A \nabla \phi_0, \nabla \phi_0\right)_{D_T}+\frac{1}{4 \alpha}\left(\varphi(1-\rho) B \nabla \psi_0, \nabla \psi_0\right)_{D_T}\right) \\
		& \leq \frac{2 \Lambda^2}{\lambda}\left(\alpha(\varphi(1-\rho),\left|\nabla \phi_0\right|^2)_{D_T}+\frac{1}{4 \alpha}(\varphi(1-\rho),\left|\nabla \psi_0\right|^2)_{D_T}\right),
	\end{aligned}
	$$
	which implies that,
	$$
	\left|(B-A) \nabla \phi_0 \cdot \nabla \psi_0\right| \leq \frac{2 \Lambda^2}{\lambda}(1-\rho)\left(\alpha\left|\nabla \phi_0\right|^2+\frac{1}{4 \alpha}\left|\nabla \psi_0\right|^2\right),\qquad\text{a.e.}\quad (x,t) \in D_T,
	$$
	due to $\varphi$ is arbitrary. Optimizing $\alpha$, we obtain that
	$$
	\left|(B-A) \nabla \phi_0 \cdot \nabla \psi_0\right| \leq \frac{2 \Lambda^2}{\lambda}(1-\rho)\left|\nabla \phi_0\right|\left|\nabla \psi_0\right|,\qquad\text{a.e.}\quad (x,t) \in D_T.
	$$

    Finally by the definition of the $\ell^2$-norm of a matrix, it gives
	\begin{equation}
		\label{H1mrho}
		|A(x,t)-B(x,t)| \leq \frac{2 \Lambda^2}{\lambda}(1-\rho(x,t)) \quad \text { a.e. } (x,t) \in D_T,
	\end{equation}
	since $\phi_0$ and $\psi_0$ are arbitrary.
	
	Next, we prove
	\begin{equation}
		\label{Hrho}
		|A(x,t)-B(x,t)| \leq 2 \Lambda \sqrt{\Lambda / \lambda} \rho(x,t) \quad \text { a.e. } (x,t) \in D_T \text {. }
	\end{equation}
	The proof of \eqref{Hrho} is similar with the one that leads to \eqref{H1mrho}. Motivated by $b^{\varepsilon}-A = \rho(a^{\varepsilon}-A)$, we define
	$$
	Y:=\lim _{\varepsilon \rightarrow 0}\left(\varphi\left(b^{\varepsilon}-A\right) \nabla \phi_0, \nabla \psi^{\varepsilon}\right)_{D_T} =(\varphi(B-A)\nabla\phi_0,\nabla\psi_0)_{D_T} \quad \text { for } \quad \varphi \geq 0.
	$$
	It is clear that
	$$
	Y \leq \limsup _{\varepsilon \rightarrow 0}\left(\varphi \rho\left|\left(A-a^{\varepsilon}\right) \nabla \phi_0\right|,\left|\nabla \psi^{\varepsilon}\right|\right)_{D_T} \leq 2 \Lambda \limsup _{\varepsilon \rightarrow 0}\left(\varphi \rho\left|\nabla \phi_0\right|,\left|\nabla \psi^{\varepsilon}\right|\right)_{D_T}.
	$$
	For any $\alpha>0$, we bound $Y$ as
	$$
	\begin{aligned}
	Y & \leq 2 \Lambda\left(\alpha(\varphi \rho,\left|\nabla \phi_0\right|^2)_{D_T}+\frac{1}{4 \alpha} \limsup _{\varepsilon \rightarrow 0}(\varphi \rho,\left|\nabla \psi^{\varepsilon}\right|^2)_{D_T}\right) \\
		& \leq 2 \Lambda\left(\alpha(\varphi \rho,\left|\nabla \phi_0\right|^2)_{D_T}+\frac{1}{4 \alpha \lambda} \limsup _{\varepsilon \rightarrow 0}\left(\varphi \rho b^{\varepsilon} \nabla \psi^{\varepsilon}, \nabla \psi^{\varepsilon}\right)_{D_T}\right)\\
		& \leq 2 \Lambda\left(\alpha\left(\varphi \rho, |\nabla \phi_0|^2\right)_{D_T}+\frac{1}{4 \alpha \lambda}\left(\varphi \rho B \nabla \psi_0, \nabla \psi_0\right)_{D_T}\right) \\
		& \leq 2 \Lambda\left(\alpha(\varphi \rho,\left|\nabla \phi_0\right|^2)_{D_T}+\frac{\Lambda}{4 \alpha \lambda}(\varphi \rho,\left|\nabla \psi_0\right|^2)_{D_T}\right),
	\end{aligned}
	$$
	which implies that
	$$
	\left|(B-A) \nabla \phi_0 \cdot \nabla \psi_0\right| \leq 2 \Lambda \rho\left(\alpha\left|\nabla \phi_0\right|^2+\frac{\Lambda}{4 \alpha \lambda}\left|\nabla \psi_0\right|^2\right),  \quad \text { a.e. } (x,t) \in D_T.
	$$
	Optimizing $\alpha$, we obtain
	$$
	\left|(B-A) \nabla \phi_0 \cdot \nabla \psi_0\right| \leq 2 \Lambda \sqrt{\Lambda / \lambda} \rho\left|\nabla \phi_0\right|\left|\nabla \psi_0\right| \quad \text { a.e. } (x,t) \in D_T.
	$$
	Thus we obtain \eqref{Hrho}.
	Finally we use the convex combination of \eqref{H1mrho} and \eqref{Hrho} as
	$$
	|A(x,t)-B(x,t)|=\rho(x)|A(x,t)-B(x,t)|+(1-\rho(x))|A(x,t)-B(x,t)|,
	$$
	this leads to \eqref{Hconverge}.
\end{proof}
If we replace $A$ by $A_H$ in the definition of $b^\varepsilon$, We may slightly generalize the above theorem as
\begin{coro} Let $A_H \in \mathcal{M}\left(\lambda^{\prime}, \Lambda^{\prime} ; D_T\right)$, and we define $b_H^{\varepsilon}=\rho a^{\varepsilon}+(1-\rho) A_H$. Denote by $B_H$ the H-limit of $b^{\varepsilon}_H$, then for a.e. $(x,t) \in D_T$, there holds
	\[
	|B_H(x,t)-\rho(x,t)A(x,t)-(1-\rho(x,t)) A_H(x,t)|\leq(\Lambda+\tilde{\Lambda}) \sqrt{\tilde{\Lambda} / \tilde{\lambda}}(\sqrt{\Lambda / \lambda}+1) \rho(x,t)(1-\rho(x,t)),
	\]
	where $\tilde{\Lambda}=\Lambda \vee \Lambda^{\prime}$ and $\tilde{\lambda}=\lambda \wedge \lambda^{\prime}$.
\end{coro}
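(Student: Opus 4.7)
My plan is to follow the two-step structure of the proof of the preceding proposition, with $b_H^\varepsilon$ playing the role of $b^\varepsilon$, but to use two distinct reference coefficients (namely $a^\varepsilon$ and the deterministic $A_H$) so that the factor $\rho(1-\rho)$ emerges from a convex-combination identity. The key algebraic observation is
\[
B_H - \rho A - (1-\rho) A_H \;=\; \rho\,(B_H - A) \;+\; (1-\rho)\,(B_H - A_H),
\]
so it suffices to prove a bound of the form $|B_H - A|\le C_1(1-\rho)$ and a bound of the form $|B_H - A_H|\le C_2\,\rho$; the triangle inequality then yields $|B_H - \rho A - (1-\rho)A_H| \le (C_1 + C_2)\,\rho(1-\rho)$. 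I also note, via convexity of $\xi\mapsto|\xi|^2$ together with $\rho,1-\rho\in[0,1]$, that $b_H^\varepsilon \in \mathcal{M}(\tilde\lambda, \tilde\Lambda; D_T)$, which supplies the coercivity and boundedness constants needed for $b_H^\varepsilon$ and guarantees existence of its H-limit $B_H$ (along a further subsequence, extracted by a diagonal argument from the subsequence giving $A$).

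For the first bound I would re-run the derivation of \eqref{H1mrho} verbatim, with $(b^\varepsilon, B)$ replaced by $(b_H^\varepsilon, B_H)$: let $\phi^\varepsilon\in\mathcal{W}_0$ solve \eqref{Hcon1} and let $\psi^\varepsilon\in\mathcal{W}_T$ solve the backward problem driven by $(b_H^\varepsilon)^\top$ and $B_H^\top$. Testing against $\varphi\psi^\varepsilon$ and $\varphi\phi^\varepsilon$, subtracting, and passing to the limit yield
\[
X := \lim_{\varepsilon\to 0}\bigl(\varphi(b_H^\varepsilon - a^\varepsilon)\nabla\phi^\varepsilon, \nabla\psi^\varepsilon\bigr)_{D_T} = \bigl(\varphi(B_H - A)\nabla\phi_0, \nabla\psi_0\bigr)_{D_T}.
\]
The identity $b_H^\varepsilon - a^\varepsilon = (1-\rho)(A_H - a^\varepsilon)$, combined with $|A_H - a^\varepsilon|\le \Lambda + \tilde\Lambda$, extracts the factor $(1-\rho)$. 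Young's inequality with parameter $\alpha$, the coercivity of $a^\varepsilon$ (constant $\lambda$) and of $b_H^\varepsilon$ (constant $\tilde\lambda$), the weighted-limit identities for $\varphi(1-\rho) a^\varepsilon\nabla\phi^\varepsilon\cdot\nabla\phi^\varepsilon$ and $\varphi(1-\rho) b_H^\varepsilon\nabla\psi^\varepsilon\cdot\nabla\psi^\varepsilon$, and optimization in $\alpha$ then produce $C_1 = (\Lambda+\tilde\Lambda)\sqrt{\Lambda\tilde\Lambda/(\lambda\tilde\lambda)}$.

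The second bound is obtained by applying the analogous argument to the derivation of \eqref{Hrho} with $A_H$ as the ($\varepsilon$-independent) reference: the identity $b_H^\varepsilon - A_H = \rho(a^\varepsilon - A_H)$ extracts the factor $\rho$, only the $\psi^\varepsilon$-side must be absorbed through energy (via $\tilde\lambda$ and $\tilde\Lambda$), and $\phi_0$ enters directly. Bounding $|a^\varepsilon - A_H|\le \Lambda + \tilde\Lambda$, repeating Young's inequality and the weighted-limit identity for $\varphi\rho\, b_H^\varepsilon\nabla\psi^\varepsilon\cdot\nabla\psi^\varepsilon$, and optimizing gives $C_2 = (\Lambda+\tilde\Lambda)\sqrt{\tilde\Lambda/\tilde\lambda}$. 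A direct computation shows
\[
C_1 + C_2 = (\Lambda + \tilde\Lambda)\sqrt{\tilde\Lambda/\tilde\lambda}\bigl(\sqrt{\Lambda/\lambda} + 1\bigr),
\]
which is precisely the constant in the statement.

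The main obstacle here is really bookkeeping rather than new ideas: one has to track with care which coercivity/boundedness constant is invoked at each step ($\lambda$ or $\tilde\lambda$, $\Lambda$ or $\tilde\Lambda$, depending on whether the energy of $a^\varepsilon$, of $A_H$, or of $b_H^\varepsilon$ is used), and one has to choose the two Young parameters so that the two partial constants combine into the clean product form above. Once these are straightened out, every remaining step is a direct transcription of the proposition's argument.
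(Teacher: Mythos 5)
Your proposal is correct and follows exactly the route the paper intends: the paper states the corollary without proof as a "slight generalization" of the preceding proposition, and your argument is the direct adaptation of that proposition's proof, using the convex splitting $B_H-\rho A-(1-\rho)A_H=\rho(B_H-A)+(1-\rho)(B_H-A_H)$ in place of the identity $|A-B|=\rho|A-B|+(1-\rho)|A-B|$ and rerunning the two test-function arguments with $(b_H^\varepsilon,B_H)$ and the reference coefficients $a^\varepsilon$ and $A_H$. The constants $C_1=(\Lambda+\tilde\Lambda)\sqrt{\Lambda\tilde\Lambda/(\lambda\tilde\lambda)}$ and $C_2=(\Lambda+\tilde\Lambda)\sqrt{\tilde\Lambda/\tilde\lambda}$ check out and sum to the stated bound.
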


\begin{remark}
If the transition function $\rho=\chi_K$; i.e. the indicator function of $K$, then the H-limit of the concurrent coefficient $b^\varepsilon$ is exactly the effective matrix $A$.
\end{remark}
\section{Error estimates}\label{sec:error}
In this part, we derive the error bounds for the proposed method, including the global error between $U$ and $u$ over domain $D$, and the interior energy error between $U$ and $u^\varepsilon$ over the defect domain $K_0$.
\subsection{Accuracy for retrieving the macroscopic information}
We firstly present the well-known stability results~\cite{Thomee:2007galerkin} of the scheme~\eqref{test}. The proof is included here for the sake of completeness.
\begin{lemma}
If $U=\{U_k\}_{k=1}^m$ is the solution of \eqref{test}, then
\begin{equation}\label{sta1}
\Vert U_m\Vert_{L^2(D)} \leq \Vert U_0\Vert_{L^2(D)}+C \|f\|_{\ell^2(0,T;H^{-1}(D))},
	\end{equation}
and
\begin{equation}\label{sta2}
\Vert \delta U \Vert_{\ell^2(0,T;L^2(D))}+ \Vert \nabla U_m\Vert_{L^2(D)} \leq C\left( \Vert \nabla U_0 \Vert_{L^2(D)} +  \|f\|_{\ell^2(0,T;L^2(D))}\right).
	\end{equation}
\end{lemma}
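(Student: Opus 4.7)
The plan is to establish both bounds by the standard discrete energy method, choosing different test functions in \eqref{test}. The first bound comes from testing with $V=U_k$ and absorbing the right-hand side via the coercivity of $b_H^\varepsilon$; the second comes from testing with $V=\delta U_k$ and exploiting a telescoping identity for $(b_H^\varepsilon\nabla U_k,\nabla(U_k-U_{k-1}))_D$. In both cases, summation from $k=1$ to $m$ produces the desired bound after Young's inequality.

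For \eqref{sta1}, I would set $V=U_k$ in \eqref{test} and multiply by $\Delta t$. Applying the polarization identity
\[
(U_k-U_{k-1},U_k)_D=\tfrac12\bigl(\|U_k\|_{L^2(D)}^2-\|U_{k-1}\|_{L^2(D)}^2+\|U_k-U_{k-1}\|_{L^2(D)}^2\bigr),
\]
the coercivity $(b_H^\varepsilon\nabla U_k,\nabla U_k)_D\ge\lambda\|\nabla U_k\|_{L^2(D)}^2$ inherited from $b_H^\varepsilon\in\mathcal M(\lambda,\Lambda;D)$, and the duality bound $\langle f_k,U_k\rangle\le\|f_k\|_{H^{-1}(D)}\|\nabla U_k\|_{L^2(D)}$, I would use Young's inequality at weight $\lambda/2$ to absorb $\|\nabla U_k\|^2$. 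Summing over $k$ telescopes the mass piece and the nonnegative gradient sum may be discarded, yielding $\|U_m\|_{L^2(D)}^2\le\|U_0\|_{L^2(D)}^2+\lambda^{-1}\|f\|_{\ell^2(0,T;H^{-1}(D))}^2$; then $\sqrt{a+b}\le\sqrt a+\sqrt b$ gives \eqref{sta1}.

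For \eqref{sta2}, I would test with $V=\delta U_k$ and multiply by $\Delta t$, producing $\Delta t\|\delta U_k\|_{L^2(D)}^2$ on the left. The bilinear piece is handled via the discrete energy identity
\[
(b\xi,\xi-\eta)=\tfrac12\bigl[(b\xi,\xi)-(b\eta,\eta)+(b(\xi-\eta),\xi-\eta)\bigr],
\]
which telescopes $(b_H^\varepsilon\nabla U_k,\nabla U_k)_D$ over $k$ and leaves a nonnegative quadratic remainder. Young's inequality on $(f_k,\delta U_k)_D$ absorbs $\tfrac{\Delta t}{2}\|\delta U_k\|^2$, while the coercivity $\tfrac12(b_H^\varepsilon\nabla U_m,\nabla U_m)_D\ge\tfrac\lambda2\|\nabla U_m\|_{L^2(D)}^2$ and the boundedness $\tfrac12(b_H^\varepsilon\nabla U_0,\nabla U_0)_D\le\tfrac\Lambda2\|\nabla U_0\|_{L^2(D)}^2$ convert the endpoint terms into the form stated in \eqref{sta2}.

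The main subtlety is the energy identity used in \eqref{sta2}: it is literal only for symmetric $b$, and must be applied to the symmetric part $(b_H^\varepsilon+(b_H^\varepsilon)^\top)/2$ in general. One must verify that this symmetric part again belongs to $\mathcal M(\lambda,\Lambda;D)$ (the quadratic form is unchanged, so coercivity persists, and boundedness by $\Lambda$ is preserved under averaging a matrix with its transpose) so that the constants in the estimate depend only on $\lambda,\Lambda$. Beyond this point, the argument is routine and mirrors the classical scalar parabolic Galerkin stability proof in~\cite{Thomee:2007galerkin}.
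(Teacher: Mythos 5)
Your proof follows essentially the same route as the paper's: test with $V=U_k$ and the polarization identity plus Young's inequality for \eqref{sta1}, then test with $V=\delta U_k$ and telescope the energy $(b_H^\varepsilon\nabla U_k,\nabla U_k)_D$ (using that $b_H^\varepsilon$ is time-independent) for \eqref{sta2}. Your added remark about applying the energy identity to the symmetric part of $b_H^\varepsilon$ is a legitimate point of care that the paper's proof passes over silently, but it does not change the argument.
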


\begin{proof}
	Let $V=U_k$ in \eqref{test}, we have
	$$
	(U_k-U_{k-1},U_k)_D+\Delta t (b^\varepsilon_H\nabla U_k,\nabla U_k)_D=\Delta t (f_k,U_k)_D.
	$$
	By the identity $a(a-b)=\frac{1}{2}(a-b)^2+\frac{1}{2}(a^2-b^2)$, we obtain
\[
	\frac{1}{2}\Vert U_k-U_{k-1} \Vert_{L^2(D)}^2+\frac{1}{2}\Vert U_k \Vert_{L^2(D)}^2-
	\frac{1}{2}\Vert U_{k-1} \Vert_{L^2(D)}^2+\Delta t (b^\varepsilon_H\nabla U_k,\nabla U_k)_D=\Delta t (f_k,U_k)_D.
\]
There exists $C$ depends on $D$ such that
\begin{align*}
\frac{1}{2}\Vert U_k \Vert_{L^2(D)}^2  +\lambda\Delta t  \Vert\nabla U_k\Vert_{L^2(D)}^2 &\leq \frac{1}{2}\Vert U_{k-1} \Vert_{L^2(D)}^2+C\Delta t \Vert f_k\Vert_{H^{-1}(D)} \Vert \nabla U_k \Vert_{L^2(D)}\\
		&\leq\frac{1}{2}\Vert U_{k-1} \Vert_{L^2(D)}^2+\frac{\lambda}{2}\Delta t\Vert \nabla U_k \Vert_{L^2(D)}^2+C\Delta t \Vert f_k\Vert_{H^{-1}(D)}^2,
	\end{align*}
	i.e.,
	$$
	\Vert U_k \Vert_{L^2(D)}^2 + \lambda\Delta t \Vert\nabla U_k\Vert_{L^2(D)}^2\leq\Vert U_{k-1} \Vert_{L^2(D)}^2+C \Delta t \Vert f_k\Vert_{H^{-1}(D)}^2.
	$$
Summing up the above inequality from $1$ to $m$, we obtain~\eqref{sta1}.
	
	Let $V=\delta U_k$ in \eqref{test}, we have
	$$
	\Vert \delta U_k \Vert_{L^2(D)}^2 + (b^\varepsilon_H\nabla U_k,\nabla \delta U_k)_D=(f_k,\delta U_k)_D.
	$$
	By the identity $a(a-b)=\frac{1}{2}(a-b)^2+\frac{1}{2}(a^2-b^2)$, we obtain
	\begin{align*}
		\Delta t\Vert \delta U_k \Vert_{L^2(D)}^2 +\frac{1}{2}(b^\varepsilon_H\nabla U_k, \nabla U_k)_D &\leq\frac{1}{2}(b^\varepsilon_H\nabla U_{k-1}, \nabla U_{k-1})_D+\Delta t \| f_k  \|_{L^2(D)}\Vert \delta U_k \Vert_{L^2(D)} \\
		&\leq \frac 1 2\left((b^\varepsilon_H\nabla U_{k-1}, \nabla U_{k-1})_D+\Delta t\| f_k  \|_{L^2(D)}^2+\Delta t\|\delta U_k \Vert_{L^2(D)}^2\right).
	\end{align*}
Henceforth,
\[
	\Delta t\Vert \delta U_k \Vert_{L^2(D)}^2 +	 (b^\varepsilon_H\nabla U_k, \nabla U_k)_D \leq (b^\varepsilon_H\nabla U_{k-1}, \nabla U_{k-1})_D+\Delta t\| f_k  \|_{L^2(D)}^2.
\]
Since $b^\varepsilon_H$ is time-independent, summing up the above inequality from $1$ to $m$, we obtain~\eqref{sta2}.
\end{proof}

We make the following two assumptions, which are reasonable because the homogenization solution~\eqref{eq:u} is essentially smooth.
\begin{assumption}
The elliptic boundary value problem
\[
	\left\{
	\begin{aligned}
		-\nabla\cdot(A\nabla\phi_g)&=g, && \textnormal{in} ~D,\\
		\phi_g&=0, && \textnormal{on} ~\partial D,
	\end{aligned}
	\right. 
\]
and the solution of the dual problem
\begin{equation}
	\label{phig}
	\left\{
	\begin{aligned}
		-\nabla\cdot(A^{\top}\nabla\phi_g)&=g, && \textnormal{in} ~D,\\
		\phi_g&=0, && \textnormal{on} ~\partial D,
	\end{aligned}
	\right.
\end{equation}	
satisfies the regularity estimate
\[
\|\phi_g\|_{H^2(D)}\leq C\|g\|_{L^2(D)}.
\]
\end{assumption}

\begin{assumption}
$u_0 \in H^2(D)\cap H_0^1(D)$ and $u\in H^1(0,T;H^2(D)\cap H_0^1(D))\cap H^2(0,T;L^2(D))$.
\end{assumption}

\begin{remark}
Assumption I holds when $A \in C^{0,1}(D)$. For
Assumption II, a sufficient condition to ensure $u\in H^1(0,T;H^2(D))\cap H^2(0,T;L^2(D))$ is $A \in C^{1,1}(D), u_0 \in H^3(D)$ and $f \in L^2(0, T; H^2(D)) \cap H^1(0, T; L^2(D)).$ 
See, e.g., \cite[\S~6.3, Theorem 4 and \S~7.1, Theorem 6]{evans_partial_2010}. These conditions are sufficient to ensure assumptions, but they are not necessary for the certain solution $u$ under consideration.
\end{remark}

One useful tool is the Galerkin projection operator $R_H:~H_0^1(D)\rightarrow X_{h,H}$ defined by
\begin{equation}\label{eq:glocalProj}
(b^\varepsilon_H \nabla R_H v,\nabla V)_D=(A\nabla v, \nabla V)_D,\qquad\text{for all}\quad V\in X_{h,H}.
\end{equation}

To derive the error bound to the Galerkin projection $R_H$, we recall the following auxiliary result.
\begin{lemma}[{\cite[Lemma 3.1]{huang_concurrent_2018}}]\label{lema:est}
For any $v\in H^1(D)$ with $D\subset\mathbb R^n$ a Lipschitz domain, and for any subset $K\subset D$, then
\begin{equation}
\nm{v}{L^2(K)}\le C\eta(K)\nm{v}{H^1(D)},
\end{equation}
where 
\[
\eta(K)= \begin{cases}
\abs{K}^{1 / 2}\abs{\log | K|}^{1 / 2}, & n=2, \\ 
\abs{K}^{1 /n}, & n\geq 3 ,\end{cases}
\]	
with $\abs{K}$ the measure of $K$, and $C$ is independent of $\abs{K}$.
\end{lemma}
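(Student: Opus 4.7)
The plan is to bound $\nm{v}{L^2(K)}$ by combining H\"older's inequality on $K$ with a global $L^p(D)$-bound on $v$ coming from a Sobolev embedding. For any $2\le p\le\infty$, H\"older gives
\[
\nm{v}{L^2(K)}\le \abs{K}^{1/2-1/p}\nm{v}{L^p(K)}\le \abs{K}^{1/2-1/p}\nm{v}{L^p(D)},
\]
so the task reduces to controlling $\nm{v}{L^p(D)}$ by $\nm{v}{H^1(D)}$ with an explicit dependence of the embedding constant on $p$, and then choosing $p$ optimally as a function of $\abs{K}$.

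For $n\ge 3$ I would take $p=2^\ast=2n/(n-2)$, the critical Sobolev exponent. Since $D$ is a bounded Lipschitz domain, the continuous embedding $H^1(D)\hookrightarrow L^{2^\ast}(D)$ gives $\nm{v}{L^{2^\ast}(D)}\le C\nm{v}{H^1(D)}$ with constant independent of $v$. Because $1/2-1/2^\ast=1/n$, the H\"older bound above yields $\nm{v}{L^2(K)}\le C\abs{K}^{1/n}\nm{v}{H^1(D)}$, matching $\eta(K)$ in this range.

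For $n=2$ the embedding $H^1(D)\hookrightarrow L^q(D)$ holds for every finite $q$, but to recover the correct $\abs{\log\abs{K}}^{1/2}$ factor one must track the growth of the embedding constant in $q$. A classical estimate --- available either through the Moser--Trudinger inequality, or by a Gagliardo--Nirenberg interpolation applied after extending $v$ to $H^1(\R^2)$ via a bounded extension operator --- supplies $\nm{v}{L^q(D)}\le C\sqrt{q}\,\nm{v}{H^1(D)}$ for every $q\ge 2$, with $C$ independent of $q$. Inserting this into H\"older gives $\nm{v}{L^2(K)}\le C\sqrt{q}\,\abs{K}^{1/2-1/q}\nm{v}{H^1(D)}$. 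When $\abs{K}$ is small, set $\alpha:=\abs{\log\abs{K}}$ and choose $q=2\alpha$: then $\abs{K}^{-1/q}=e^{\alpha/q}=\sqrt{e}$ is harmless and $\sqrt{q}=\sqrt{2\alpha}$, producing the bound $\nm{v}{L^2(K)}\le C\abs{K}^{1/2}\abs{\log\abs{K}}^{1/2}\nm{v}{H^1(D)}$. The regime where $\abs{K}$ is bounded away from zero is absorbed into the constant by the trivial $\nm{v}{L^2(K)}\le \nm{v}{L^2(D)}\le C\nm{v}{H^1(D)}$.

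The main obstacle is the two-dimensional case: treating $H^1\hookrightarrow L^q$ as a black box loses the logarithmic factor, so the argument hinges on the sharp $\sqrt{q}$ dependence of the embedding constant, together with the fact that this dependence survives restriction to the bounded Lipschitz domain $D$. Once that ingredient is in place, the optimization over $q$ is elementary, and the case $n\ge 3$ follows immediately from H\"older and the critical Sobolev embedding without any further work.
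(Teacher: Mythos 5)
Your proof is correct, and it follows essentially the same route as the source the paper defers to: the paper gives no proof of its own, noting only that the lemma is the $s=1$ case of \cite[Lemma 3.1]{huang_concurrent_2018}, whose argument is precisely H\"older's inequality on $K$ combined with the Sobolev embedding, using the critical exponent $2^\ast=2n/(n-2)$ for $n\ge 3$ and the $\sqrt{q}$ growth of the $H^1\hookrightarrow L^q$ embedding constant (optimized at $q\sim\abs{\log\abs{K}}$) for $n=2$. Your handling of the two-dimensional borderline case, including the extension to $\R^2$ and the separate treatment of $\abs{K}$ bounded away from zero, is exactly the right way to recover the logarithmic factor.
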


This estimate is the special case of~\cite[Lemma 3.1]{huang_concurrent_2018} with $s=1$ provided that $u$ satisfies Assumption II. If $u$ enjoys higher regularity, e.g., $u\in H^1(0,T;W^{1,\infty}(D))$, then $\eta(K)=|K|^{1/2}$ in any spatial dimension $n$. This rate is sharp, as demonstrated by the example in \cite[Appendix A]{huang_concurrent_2018}. Further discussion on the factor $\eta(K)$ may be found in~\cite{huang_concurrent_2018} and we do not elaborate on this for brevity.

The error estimate for the above Galerkin projection may be found in
\begin{lemma}\label{lemma2}
Under Assumption I, and if $v \in H_0^1(D) \cap H^2(D)$, then
\begin{equation}\label{eq:lema2H1}
	\left.\|\nabla\left(v-R_H v\right)\|_{L^2(D)}\right. \leq C\left(H+\eta(K)+e(\mathrm{HMM})\right)\|v\|_{H^2(D)},
\end{equation}
and
\begin{equation}\label{eq:lema2L2}
	\|v-R_H v\|_{L^2(D)} \leq C\left(H^2+\eta(K)^2
	+e(\mathrm{HMM})\right)\|v\|_{H^2(D)},
\end{equation}	
where
\[
e(\mathrm{HMM})=\max _{x\in D\backslash K}|\left(A-A_H\right)\left(x\right)|.
\]
	
Particularly, if the initial value $U_0$ is the solution of~\eqref{eq:U0}, then
\begin{equation}\label{eq:U0H1}
\nm{\nabla(U_0-R_Hu_0)}{L^2(D)}\le C\left(\eta(K)+e(\mathrm{HMM})\right)\nm{u_0}{H^2(D)},
\end{equation}
and
\begin{equation}\label{eq:U0L2}
\nm{U_0-R_Hu_0}{L^2(D)}\le C\left(H^2+\eta(K)^2+e(\mathrm{HMM})\right)\nm{u_0}{H^2(D)}.
\end{equation}
\end{lemma}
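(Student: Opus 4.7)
My plan is to compare the hybrid Galerkin projection $R_H v$ with the standard Galerkin approximation of $v$ associated with the effective coefficient $A$, and to exploit the algebraic identity $A - b^\varepsilon_H = \rho(A - a^\varepsilon) + (1-\rho)(A - A_H)$, which is uniformly bounded on $K$ and bounded by $e(\mathrm{HMM})$ on $D \setminus K$. Coupling these pointwise bounds with Lemma~\ref{lema:est}, which transfers the $L^2(K)$-norm of an $H^1(D)$-function into the defect factor $\eta(K)$, will produce all four estimates.

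For \eqref{eq:lema2H1} I would first introduce $v_H \in X_{h,H}$ as the standard $A$-Galerkin projection of $v$, so that $\|\nabla(v-v_H)\|_{L^2(D)} \le CH\|v\|_{H^2(D)}$ by Assumption~I and C\'ea's lemma. Applying Lemma~\ref{lema:est} to $\nabla v$ together with this estimate gives $\|\nabla v_H\|_{L^2(K)} \le C(H+\eta(K))\|v\|_{H^2(D)}$. Subtracting the defining equations of $R_H v$ and $v_H$ yields
\[
(b^\varepsilon_H \nabla(R_H v - v_H), \nabla V)_D = ((A - b^\varepsilon_H) \nabla v_H, \nabla V)_D, \qquad V \in X_{h,H}.
\]
Testing against $V = R_H v - v_H$, splitting the integral over $K$ and $D \setminus K$, and using uniform ellipticity of $b^\varepsilon_H$ produces $\|\nabla(R_H v - v_H)\|_{L^2(D)} \le C(H + \eta(K) + e(\mathrm{HMM}))\|v\|_{H^2(D)}$; the triangle inequality then yields \eqref{eq:lema2H1}.

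The $L^2$ bound \eqref{eq:lema2L2} I would obtain by Aubin--Nitsche duality. Let $\phi_g$ solve \eqref{phig}; Assumption~I gives $\|\phi_g\|_{H^2(D)} \le C\|g\|_{L^2(D)}$. Writing $e := v - R_H v$ and noting the perturbed Galerkin orthogonality
\[
(A \nabla e, \nabla V)_D = -((A - b^\varepsilon_H) \nabla R_H v, \nabla V)_D, \qquad V \in X_{h,H},
\]
I split $(e,g)_D = (A\nabla e, \nabla \phi_g)_D$ by inserting the Lagrange interpolant $I_H\phi_g$. The piece $(A\nabla e, \nabla(\phi_g - I_H\phi_g))_D$ is bounded by $CH \|\nabla e\|_{L^2(D)}\|g\|_{L^2(D)}$ via standard interpolation and \eqref{eq:lema2H1}; the remaining piece $((A - b^\varepsilon_H)\nabla R_H v, \nabla I_H\phi_g)_D$ is again split over $K$ and $D\setminus K$, using Lemma~\ref{lema:est} applied to $\nabla \phi_g$ (and \eqref{eq:lema2H1} to bound $\|\nabla R_H v\|_{L^2(K)}$). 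Young's inequality then absorbs the cross terms $H\eta(K)$, $H\,e(\mathrm{HMM})$, and $\eta(K)\,e(\mathrm{HMM})$ into the target sum $H^2 + \eta(K)^2 + e(\mathrm{HMM})$.

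For \eqref{eq:U0H1}--\eqref{eq:U0L2} I would compare $U_0$ and $R_H u_0$ directly. Subtracting \eqref{eq:U0} from \eqref{eq:glocalProj} taken at $v = u_0$ produces $(b^\varepsilon_H \nabla(U_0 - R_H u_0), \nabla V)_D = ((b^\varepsilon_H - A) \nabla u_0, \nabla V)_D$ for all $V \in X_{h,H}$. Testing with $V = U_0 - R_H u_0$, splitting over $K$ and $D \setminus K$, and applying Lemma~\ref{lema:est} to $\nabla u_0$ on $K$ yields \eqref{eq:U0H1}; the $H$-term is absent here because there is no interpolation error between two elements of $X_{h,H}$. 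The $L^2$ bound \eqref{eq:U0L2} follows by an Aubin--Nitsche argument that mirrors the one for \eqref{eq:lema2L2}. The main technical obstacle throughout is the bookkeeping of cross terms in the duality arguments, making sure that every $H\eta(K)$, $H\,e(\mathrm{HMM})$, and $\eta(K)\,e(\mathrm{HMM})$ product is absorbed cleanly into $H^2 + \eta(K)^2 + e(\mathrm{HMM})$; care is also required because $R_H v$ itself has no a priori $H^2$-regularity, so steps demanding smoothness must always be routed through $v_H$ or $v$ and never through $R_H v$.
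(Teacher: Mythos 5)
Your proposal is correct and follows essentially the same route as the paper: the paper cites the intermediate bounds from \cite{huang_concurrent_2018}*{Theorem 3.1} (best-approximation term plus $\|\nabla v\|_{L^2(K)}$ plus $e(\mathrm{HMM})\|\nabla v\|_{L^2(D)}$, and their product via Aubin--Nitsche duality for the $L^2$ bound) and then inserts the interpolation estimate and Lemma~\ref{lema:est}, exactly as you do, while your treatment of the $U_0$ estimates via the identity $(b_H^\varepsilon\nabla(U_0-R_Hu_0),\nabla V)_D=((b_H^\varepsilon-A)\nabla u_0,\nabla V)_D$ and a mirrored duality argument coincides with the paper's. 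The only cosmetic difference is that you derive the perturbation bounds by routing through the auxiliary $A$-Galerkin projection $v_H$ rather than applying $(A-b_H^\varepsilon)$ to $\nabla v$ directly, which changes nothing essential.
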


\begin{proof}
Proceeding along the same line that leads to~\cite[Theorem 3.1]{huang_concurrent_2018}, we obtain
\begin{equation}\label{eq:lema2L20}\begin{aligned}
\|v-R_H v\|_{L^2(D)}&\leq C\left( \inf_{V\in X_{h,H}}\|\nabla(v-V)\|_{L^2(D)}+\| \nabla v \|_{L^2(K)}\right)\\
		&\qquad\times \sup_{g\in L^2(D)} \frac{1}{\|g\|_{L^2(D)}}\left( \inf_{V\in X_{h,H}}\|\nabla(\phi_g-V)\|_{L^2(D)}+\| \nabla \phi_g\|_{L^2(K)}\right)\\
		&\qquad\qquad+C e(\mathrm{HMM})\|\nabla v\|_{L^2(D)},
\end{aligned}
\end{equation}
and
\begin{equation}\label{eq:lema2H10}
		\|\nabla (v-R_H v)\|_{L^2(D)} \leq C \Big( \inf_{V\in X_{h,H}}\|\nabla(v-V)\|_{L^2(D)}+\| \nabla v \|_{L^2(K)}+e(\mathrm{HMM})\|\nabla v\|_{L^2(D)}\Big),
\end{equation}
where $\phi_g$ is the solution of \eqref{phig}.
	
As to the first term, 
\[
	\inf_{V\in X_{h,H}} \| \nabla (v-V) \|_{L^2(D)}\leq \|\nabla(v-\Pi v)\|_{L^2(D)}\leq CH\|v\|_{H^2(D)},
\]
where $\Pi:H_0^1(D)\to X_{h,H}$ is the linear Lagrange interpolation, and also
\[
	\inf_{V\in X_{h,H}} \| \nabla (\phi_g-V) \|_{L^2(D)}\leq CH\|\phi_g\|_{H^2(D)}\leq CH\|g\|_{L^2(D)},
\]
because of Assumption I.
	
As to the second term, using Lemma~\ref{lema:est}, we obtain
\[
\| \nabla v \|_{L^2(K)}\leq C \eta (K) \| v \|_{H^2(D)},
\]
and
\[
	\| \nabla \phi_g \|_{L^2(K)} \leq C \eta (K) \| \phi_g \|_{H^2(D)}\leq C \eta(K) \| g \|_{L^2(D)}.
\]
Substituting all the above inequalities into~\eqref{eq:lema2L20} and~\eqref{eq:lema2H10}, we obtain~\eqref{eq:lema2L2} and~\eqref{eq:lema2H1}.

Next,if $U_0$ is the solution of~\eqref{eq:U0}, then
\[
(b_H^\varepsilon\nabla(U_0-R_Hu_0),\nabla V)_D=((b_H^\varepsilon-A)\nabla u_0,\nabla V)_D\qquad\text{for all}\quad V\in X_{h,H}.
\]
Hence,
\begin{align*}
((b_H^\varepsilon-A)\nabla u_0,\nabla V)_D&\le C\Bigl(\nm{\nabla u_0}{L^2(K)}+e(\mathrm{HMM})\nm{\nabla u_0}{L^2(D)}\Bigr)\nm{\nabla V}{L^2(D)}\\
&\le C(\eta(K)+e(\mathrm{HMM}))\nm{u_0}{H^2(D)}\nm{\nabla V}{L^2(D)}.
\end{align*}
Choosing $V=U_0-R_Hu_0$, we obtain ~\eqref{eq:U0H1}.

To obtain the $L^2$-estimate, we use the Aubin-Nitsche’s dual argument~\cite{Nitsche:1968kriterium}. For any $g\in L^2(D)$, let $\phi_g$ be the solution of~\eqref{phig}, we have
\begin{equation}\label{eq:U0L20}\begin{aligned}
(g,U_0-R_Hu_0)_D&=(A\nabla(U_0-R_Hu_0),\nabla\phi_g)_D\\
&=(A\nabla(U_0-R_Hu_0),\nabla(\phi_g-\Pi\phi_g))_D+\Lr{(A-b_H^\varepsilon)\nabla(U_0-R_Hu_0),\nabla \Pi\phi_g}_D\\
&\qquad+(b_H^\varepsilon\nabla(U_0-R_Hu_0),\nabla \Pi\phi_g)_D.
\end{aligned}\end{equation}
The first term in~\eqref{eq:U0L20} is bounded by
\begin{align*}
(A\nabla(U_0-R_Hu_0),\nabla(\phi_g-\Pi\phi_g))_D&\le CH\Lr{\eta(K)+e(\mathrm{HMM})}\nm{u_0}{H^2(D)}\nm{g}{L^2(D)}\\
&\le C\Bigl(H^2+\eta(K)^2+e(\mathrm{HMM})^2\Bigr)\nm{u_0}{H^2(D)}\nm{g}{L^2(D)}.
\end{align*}

As to the second term in~\eqref{eq:U0L20}, we have
\begin{align*}
((A-b_H^\varepsilon)\nabla(U_0-R_Hu_0),\nabla \Pi\phi_g)_D&\le C\Bigl(\nm{\nabla(\phi_g-\Pi\phi_g)}{L^2(D)}+\nm{\nabla\phi_g}{L^2(K)}+e(\mathrm{HMM})\nm{\nabla\phi_g}{L^2(D)}\Bigr)\\
&\qquad\times\nm{\nabla(U_0-R_Hu_0)}{L^2(D)}\\
&\le C\Bigl(H^2+\eta(K)^2+e(\mathrm{HMM})^2\Bigr)\nm{u_0}{H^2(D)}\nm{g}{L^2(D)}.
\end{align*}

Finally, the last term in~\eqref{eq:U0L20} may be bounded as
\begin{align*}
(b_H^\varepsilon\nabla(U_0-R_Hu_0),\nabla \Pi\phi_g)_D&=((b_H^\varepsilon-A)\nabla u_0,\nabla\Pi\phi_g)_D\\
&\le C\Bigl(\nm{\nabla u_0}{L^2(K)}\nm{\nabla\Pi\phi_g}{L^2(K)}+e(\mathrm{HMM})\nm{\nabla u_0}{L^2(D)}\nm{\nabla\Pi\phi_g}{L^2(D)}\Bigr)\\
&\le C\Bigl(H^2+\eta(K)^2+e(\mathrm{HMM})\Bigr)\nm{u_0}{H^2(D)}\nm{g}{L^2(D)}.
\end{align*}

Substituting the above three inequalities into~\eqref{eq:U0L20}, we obtain~\eqref{eq:U0L2}.
\end{proof}

Based on the above lemma, we derive the error bound for the
approximation of the homogenized solution.
\begin{theorem}\label{thm:global}
Under the assumptions I and II, there holds
\begin{equation}\label{thm11}
\begin{aligned}
\|u_m-U_m\|_{L^2(D)}&\leq C\left(H^2+\eta(K)^2+e(\mathrm{HMM})\right)\left(\|u_0 \|_{H^2(D)}+\|\partial_t u\|_{L^2(0,T;H^2(D))}\right)\\
&\qquad+C \Delta t \|\partial_t^2 u\|_{L^2(0, T;L^2(D))},
\end{aligned}
\end{equation}
and
\begin{equation}
		\label{thm12}
	\begin{aligned}
		\|\nabla(u_m-U_m)\|_{L^2(D)}&\leq C\bigl(H+\eta(K)+e(\mathrm{HMM})\bigr)\left(\|u_0 \|_{H^2(D)}+\|\partial_t u\|_{L^2(0,T;H^2(D))}\right)\\
		&\qquad+C \Delta t \|\partial_t^2 u\|_{L^2(0, T;L^2(D))},
    \end{aligned}
\end{equation}
and
\begin{equation}\label{thm13}
\begin{aligned}
\|\partial_t u-\delta U\|_{\ell^2(0,T;L^2(D))} &\leq C\left(H^2+\eta(K)+e(\mathrm{HMM})\right)
		\left(\|u_0 \|_{H^2(D)}+\|\partial_t u\|_{L^2(0, T; H^2(D))}\right)\\
&\qquad+C \Delta t\|\partial_t^2 u\|_{L^2(0,T;L^2(D))}.
	\end{aligned}
	\end{equation}
\end{theorem}	

\begin{proof}
We write
	$$u_m-U_m=u_m-R_H u_m+R_H u_m-U_m.$$
	By Lemma \ref{lemma2}, we have
	\begin{equation}
	\label{thm14}
	\|u_m-R_H u_m\|_{L^2(D)} \leq C\left(H^2+\eta(K)^2+e(\mathrm{HMM})\right)\|u_m\|_{H^2(D)},
	\end{equation}
	and 
	\begin{equation}
	\label{thm15}
	\left.\|\nabla\left(u_m-R_H u_m\right)\|_{L^2(D)}\right. \leq C\big(H+\eta(K)+e(\mathrm{HMM})\big)\|u_m\|_{H^2(D)},
	\end{equation}
	where
	\[
		\|u_m\|_{H^2(D)} \leq\|u_0\|_{H^2(D)}
		+\int_0^T\|\partial_t u(\cdot, t)\|_{H^2(D)}\mathrm{d}\,t\leq\|u_0\|_{H^2(D)}+\sqrt{T}\|\partial_t u\|_{L^2(0, T; H^2(D))}.
	\]
	
On the one hand, we write
\begin{align*}
(\delta\left(U_k-R_H u_k), V\right)_D+\left(b^\varepsilon_H \nabla(U_k-R_H u_k), \nabla V\right)_D&=\left(f_k, V\right)_D-\left(\delta R_H u_k, V\right)_D-(A \nabla u, \nabla V)_D \\
	&=\left(\partial_t u_k-\delta R_H u_k, V\right)_D=\left(\partial_t u_k-R_H \delta u_k, V\right)_D,
	\end{align*}
where $R_H$ and $\delta$ are changeable because $A$ and $b_h^\varepsilon$ are time-independent. Using the stability
estimates~\eqref{sta1} and~\eqref{sta2}, we obtain 
\begin{equation}\label{sta3}
	\|U_m-R_{H} u_m\|_{L^2(D)} \leq \|U_0-R_H u_0\|_{L^2(D)}+C\|\partial_t u-R_H \delta u\|_{\ell^2(0, T; H^{-1}(D))},
\end{equation}
and
\begin{equation}\label{sta4}
\begin{aligned}
		\|\delta (U-R_H u)\|_{\ell^2(0,T;L^2(D))}+\|\nabla\left(U_m-R_H u_m\right)\|_{L^2(D)}&\leq\|\nabla\left(U_0-R_H u_0\right)\|_{L^2(D)}\\
		&\qquad+C\|\partial_t u-R_H \delta u\|_{\ell^2(0, T; L^2(D)) }.
\end{aligned}
	\end{equation}
Estimates for the initial value term have been shown in Lemma~\ref{lemma2}. As to the source term, we write
\begin{equation}\label{sourceterm}
\partial_t u_k-R_H \delta u_k=(\partial_t u_k-\delta u_k)
+(\delta u_k-R_H \delta u_k) \text {. }
\end{equation}
	
For the first term in \eqref{sourceterm}, we have
\begin{align*}
\left(\partial_t u_k-\delta u_k\right)(x) & =\frac{1}{\Delta t} \int_{t_{k-1}}^{t_k}\left(\partial_t u\left(x, t_k\right)-\partial_t u(x, t)\right)\mathrm{d}\,t=\frac{1}{\Delta t} \int_{t_{k-1}}^{t_k}\left(t-t_{k-1}\right) \partial_t^2 u(x, t)\mathrm{d}\,t  \\
& \leq \sqrt{\Delta t}\|\partial_t^2 u(x, \cdot)\|_{L^2\left(t_{k-1}, t_k\right)}.
\end{align*}
Therefore,
\[
		\|\partial_t u-\delta u\|_{\ell^2(0, T;L^2(D))}^2\leq \Delta t^2 \sum_{k=1}^m\|\partial_t^2 u\|_{L^2(t_{k-1}, t_k ;L^2(D))}^2=\Delta t^2\|\partial_t^2 u\|_{L^2(0, T ;L^2(D))}^2 .
	\]
As to the second term in ~\eqref{sourceterm}, we obtain
\begin{align*}
		 \left(\delta u_k-R_H \delta u_k\right)(x)&=\frac{I-R_{H}}{\Delta t} \int_{t_{k-1}}^{t_k} \partial_t u(x, t) d t=\frac{1}{\Delta t} \int_{t_{k-1}}^{t_k}\left(\partial_t u-R_{H} \partial_t u\right)(x, t) d t \\
		& \leq \frac{1}{\sqrt{\Delta t}}\|\left(\partial_t u-R_H \partial_t u\right)(x, \cdot)\|_{L^2(t_{k-1}, t_k)}.
	\end{align*}
Therefore, we have
\[
\|\delta u-R_H \delta u\|_{\ell^2(0,T;L^2(D))}^2
\leq \sum_{k=1}^m\|\partial_t u-R_H \partial_t u\|_{L^2(t_{k-1}, t_k,L^2(D))}^2=\|\partial_t u-R_H \partial_t u\|^2_{L^2(0,T;L^2(D))}.
\]
Combining above two inequalities with the triangle inequality, we get
\[
	\begin{aligned}
		 \|\partial_t u-R_H \delta u\|_{\ell^2(0, T; H^{-1}(D))}&\leq \|\partial_t u-R_H \delta u\|_{\ell^2(0, T ; L^2(D))}\\
		&\leq \|\partial_t u-\delta u\|_{\ell^2(0,T;L^2(D))}+\|\delta u-R_H\delta u\| _{\ell^2(0, T;L^2(D))} \\
		&\leq \Delta t\|\partial_t^2 u\|_{L^2(0,T;L^2(D))}+\|\partial_t u-R_H \partial_t u\|_{L^2(0,T;L^2(D))} \\
		&\leq \Delta t\|\partial_t^2 u\|_{L^2(0,T;L^2(D))}+C\left(H^2+\eta(K)^2+e(\mathrm{HMM})\right)\|\partial_t u\|_{L^2(0, T; H^2(D))}. \\
	\end{aligned}
\]

Substituting the above inequalities into~\eqref{sta3} 
and~\eqref{sta4}, we obtain
\begin{equation}
\begin{aligned}
\|U_m-R_H u_m\|_{L^2(D)}&\leq C\Bigl(H^2+\eta(K)^2+e(\mathrm{HMM})\Bigr)
\left(\|u_0\|_{H^2(D)}+
 \|\partial_t u\|_{L^2(0, T; H^2(D))}\right)\\
&\qquad+\Delta t \|\partial_t^2 u\|_{L^2(0,T;L^2(D))},
\end{aligned}
\end{equation}
and
\begin{equation}
\begin{aligned}
	\|\delta(U-R_Hu)\|_{\ell (0,T;L^2(D))}&+\|
	\nabla (U_m-R_H u_m)\|_{L^2(D)}
	\leq \Delta t \|\partial_t^2 u\|_{L^2(0,T;L^2(D))}\\
	&\quad+C\Bigl(H^2+\eta(K)+e(\mathrm{HMM})\Bigr)
	\left(\|u_0\|_{H^2(D)}+
	\|\partial_t u\|_{L^2(0, T; H^2(D))}\right).
	\end{aligned}
\end{equation}

Combining~\eqref{thm14} and~\eqref{thm15} with the triangle inequality, we obtain estimates of $u_m-U_m$ in  \eqref{thm11} and~\eqref{thm12}. Moreover,
$$
\|\partial_t u-\delta U\|_{\ell(0,T;L^2(D))}
\leq \|\partial_t u-R_H\delta u\|_{\ell(0,T;L^2(D))}
+ \|\delta (U-R_H u)\|_{\ell(0,T;L^2(D))},
$$
which gives~\eqref{thm13}.
\end{proof}

\begin{remark}
In~\cite[Theorem 1.1]{mingAnalysisHeterogeneousMultiscale2007}, for the general numerical homogenization solution $U$ calculated by HMM, the term $\|\nabla(u_m-U_m)\|_{L^2(D)}$ is bounded by
$\mathcal{O}(\Delta t+H+e(\mathrm{HMM})\Delta t^{-1/2})$ . 
The above estimate eliminates the factor $\Delta t^{-1/2}$ when the diffusion coefficient is time-independent, which indicates that $\Delta t$ can be arbitrarily small.
\end{remark}
\subsection{Accuracy for retrieving the local microscopic information}

In order to prove the localized energy error estimate, we introduce some notation and results  in \cite{demlow_local_2011}. For a subdomain $B \subset D$, define $H_{<}^1(B):=\set{u \in H^1(D)|u|_{D \backslash B}=0}$. Let $G_1$ and $G$ be subsets of $K$ with $G_1 \subset G$ and $\operatorname{dist}\left(G_1, \partial G \backslash \partial D\right)=d>0$. The following assumptions are assumed to hold:

A1: Local interpolant. There exists a local interpolant such that for any $u \in H_{<}^1\left(G_1\right) \cap$ $C\left(G_1\right), I u \in X_{h,H}\cap H_{<}^1(G)$.

A2: Inverse properties. For each $\chi \in X_{h,H}$ and $\tau \in \mathcal{T}_{h,H}\cap K, 1 \leq p \leq q \leq \infty$, and $0 \leq \nu \leq s \leq r$,
$$
\|\chi\|_{W^{s, q}(\tau)} \leq C h_\tau^{\nu-s+n / p-n / q}\|\chi\|_{W^{\nu, p}(\tau)} .
$$

A3. Superapproximation. Let $\omega \in C^{\infty}(K) \cap H_{<}^1\left(G_1\right)$ with $|\omega|_{W^{j, \infty}{ }{(K)}} \leq C d^{-j}$ for integers $0 \leq j \leq r$, for each $\chi \in X_{h,H} \cap H_{<}^1(G)$ and for each $\tau \in \mathcal{T}_{h,H}\cap K$ satisfying $h_\tau \leq d$,
\begin{equation}
\label{superapp}
\|\omega^2 \chi-I(\omega^2 \chi)\|_{H^1(\tau)} \leq C\left(\frac{h_\tau}{d}\|\nabla(\omega \chi)\|_{L^2(\tau)}+\frac{h_\tau}{d^2}\|\chi\|_{L^2(\tau)}\right),
\end{equation}
where the interpolant $I$ is defined in A1. The assumptions A1,A2 and A3 are satisfied by standard Lagrange finite element defined
on shape-regular grids \cite{demlow_local_2011}.

Set $K_0 \subset K_1\subset K$, with $\text{dist}(K_1,\partial K\backslash \partial D)=\text{dist}(K_0,\partial K_1\backslash \partial D)=cd$ for a universal constant $c$. The following lemma is a generalization of~\cite[Lemma 3.3]{demlow_local_2011} by including the effect of the source term. Similar results have been proven in~\cite{nitsche_interior_1974,jinchao2001local} without clarifying the dependence on the distance $d$. 
\begin{lemma}\label{lemma3}
For $W\in X_{h,H}$ satisfies
\[
(b^\varepsilon_H\nabla W, \nabla V)_D=(f,V)_D \qquad  \text{for all\quad}  V\in X_{h,H}\cap H_<^1(K_1),
\]
then
\begin{equation}\label{eq:localest}
	\|\nabla W\|_{L^2\left(K_0\right)} \leq C\Lr{d^{-1}\|W\|_{L^2\left(K_1\right)}+\|f\|_{H^{-1}\left(K_1\right)}}.
\end{equation}
\end{lemma}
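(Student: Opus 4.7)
The plan is to establish this Caccioppoli-type interior estimate by the standard cutoff plus superapproximation technique of Demlow-Guzman-Schatz, modified to absorb the source term $f$. Introduce a smooth cutoff $\omega\in C_c^\infty(K_1)$ with $\omega\equiv 1$ on $K_0$, $0\le \omega\le 1$, and $|\omega|_{W^{j,\infty}(K)}\le Cd^{-j}$. Since $\omega=1$ on $K_0$, one has $\|\nabla W\|_{L^2(K_0)}\le \|\omega\nabla W\|_{L^2(D)}$, so it suffices to bound $\|\omega\nabla W\|_{L^2(D)}$. By ellipticity of $b^\varepsilon_H$,
\[
\lambda\|\omega\nabla W\|_{L^2(D)}^2 \le (b^\varepsilon_H\nabla W,\omega^2\nabla W)_D = (b^\varepsilon_H\nabla W,\nabla(\omega^2 W))_D - 2(b^\varepsilon_H\nabla W,\omega W\nabla\omega)_D.
\]

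Next, take as test function $V=I(\omega^2 W)$, which by A1 belongs to $X_{h,H}\cap H_<^1(K_1)$. The hypothesis on $W$ gives $(b^\varepsilon_H\nabla W,\nabla I(\omega^2 W))_D=(f,I(\omega^2 W))_D$, so
\[
(b^\varepsilon_H\nabla W,\nabla(\omega^2 W))_D = (b^\varepsilon_H\nabla W,\nabla(\omega^2 W-I(\omega^2 W)))_D + (f,I(\omega^2 W))_D.
\]
The cross term $|(b^\varepsilon_H\nabla W,\omega W\nabla\omega)_D|$ is controlled by $\varepsilon\|\omega\nabla W\|^2+C_\varepsilon d^{-2}\|W\|_{L^2(K_1)}^2$ via Young's inequality and $|\nabla\omega|\le Cd^{-1}$. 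For the source term, the $H^{-1}$-$H^1$ duality on $K_1$ yields $|(f,I(\omega^2 W))_D|\le \|f\|_{H^{-1}(K_1)}\|\nabla I(\omega^2 W)\|_{L^2(K_1)}$, and then triangle inequality, Leibniz, and A3 give $\|\nabla I(\omega^2 W)\|_{L^2(K_1)}\le C(\|\omega\nabla W\|_{L^2(D)}+d^{-1}\|W\|_{L^2(K_1)})$; a final Young step puts these into absorbable form.

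The main obstacle is the superapproximation term. Applying A3 element by element with $\chi=W$ (recalling $h_\tau\le h\le d$) and summing yields
\[
\|\nabla(\omega^2 W-I(\omega^2 W))\|_{L^2(K_1)}\le C\tfrac{h}{d}\|\nabla(\omega W)\|_{L^2(K_1)}+C\tfrac{h}{d^2}\|W\|_{L^2(K_1)},
\]
so pairing against $\nabla W$ through $b^\varepsilon_H$ produces a term $(h/d)\|\nabla W\|_{L^2(K_1)}\|\nabla(\omega W)\|_{L^2(K_1)}$ in which $\|\nabla W\|_{L^2(K_1)}$ is the full-strength gradient on $K_1$ rather than the weighted version. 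This cannot be absorbed directly; the standard remedy, which I will follow, is to insert a finite nested sequence of cutoffs $\omega^{(0)},\omega^{(1)},\dots,\omega^{(N)}$ with supports contained in slightly larger concentric subdomains of $K_1$ and iterate the above one-step estimate, exactly as in the proof of Lemma~3.3 of~\cite{demlow_local_2011}. The geometric series absorption across these layers eliminates the oversized $\|\nabla W\|_{L^2(K_1)}$ factor and, combined with the cross-term and source-term bounds above, delivers the advertised estimate $\|\nabla W\|_{L^2(K_0)}\le C(d^{-1}\|W\|_{L^2(K_1)}+\|f\|_{H^{-1}(K_1)})$.
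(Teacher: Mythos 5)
Your proof follows the paper's strategy almost verbatim up to the crux: cutoff $\omega$, ellipticity, testing with $I(\omega^2W)\in X_{h,H}\cap H^1_<(K_1)$, the splitting $(b^\varepsilon_H\nabla W,\nabla(\omega^2W))_D=(b^\varepsilon_H\nabla W,\nabla(\omega^2W-I(\omega^2W)))_D+(f,I(\omega^2W))_D$, and the Young-inequality treatment of the cross term and the source term are all exactly what the paper does. The divergence is in how you handle the superapproximation term $\sum_\tau\|\nabla W\|_{L^2(\tau)}\bigl(\tfrac{h_\tau}{d}\|\nabla(\omega W)\|_{L^2(\tau)}+\tfrac{h_\tau}{d^2}\|W\|_{L^2(\tau)}\bigr)$, and here your proposed remedy has a genuine gap. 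The paper (following the actual proof of Lemma~3.3 in \cite{demlow_local_2011}) disposes of the ``full-strength gradient'' in a single step by the \emph{elementwise inverse estimate} A2: $h_\tau\|\nabla W\|_{L^2(\tau)}\le C\|W\|_{L^2(\tau)}$, which converts the offending term into $Cd^{-1}\|W\|_{L^2(K_1)}\|\nabla(\omega W)\|_{L^2(K_1)}+Cd^{-2}\|W\|_{L^2(K_1)}^2$, after which everything absorbs. No iteration is needed for this lemma; the nested-cutoff iteration in \cite{demlow_local_2011} belongs to the proof of their main interior error theorem, not to this Caccioppoli-type estimate for discrete (sub)solutions.

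More importantly, the iteration as you describe it does not close the argument on its own. Each layer of width $d/N$ yields a one-step bound of the form $\|\nabla W\|_{L^2(G_j)}\le\theta\,\|\nabla W\|_{L^2(G_{j+1})}+C\bigl(\tfrac{N}{d}\|W\|_{L^2(G_{j+1})}+\|f\|_{H^{-1}(K_1)}\bigr)$ with $\theta\sim ChN/d$; the geometric sum leaves a residual $\theta^N\|\nabla W\|_{L^2(K_1)}$ that is small but \emph{not zero}, so you still need the inverse inequality (or some other a priori control of $\|\nabla W\|_{L^2(K_1)}$) to convert it into $d^{-1}\|W\|_{L^2(K_1)}$ --- i.e.\ the very tool whose use would have made the iteration unnecessary. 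Moreover, if you try to push $N\sim d/h$ to make $\theta^N$ negligible, the cutoff gradients scale like $N/d\sim h^{-1}$ and the accumulated lower-order term degrades to $Ch^{-1}\|W\|_{L^2(K_1)}$ rather than the claimed $Cd^{-1}\|W\|_{L^2(K_1)}$. To repair the proof, drop the iteration and insert the elementwise inverse estimate immediately after applying A3, exactly as the paper does.
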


\begin{proof}
We set $\omega \in C^\infty(K_1)\cap H_<^1(K_1)$ a cut-off function with $\omega \equiv 1$ on $K_0$ and $|\nabla^j \omega|\leq Cd^{-j},~j=0,1$. A direct manipulation gives
\begin{align*}
(b^\varepsilon_H \nabla(\omega W), \nabla(\omega W))_D&=(b^\varepsilon_H \nabla W, \nabla(\omega^2 W))_D
+(b^\varepsilon_HW\nabla\omega, W\nabla\omega)_D\\
&\qquad+(b^\varepsilon_HW\nabla\omega,\nabla(\omega W))_D-(b^\varepsilon_H\nabla(\omega W),W\nabla\omega)_D\\
&\leq (b^\varepsilon_H \nabla W, \nabla(\omega^2 W))_D+\frac{\lambda}{4}\|\nabla(\omega W)\|^2_{L^2\left(K_1\right)}+\frac{C}{d^2 }\|W\|^2_{L^2\left(K_1\right)}.
\end{align*}

The first term may be reshape into
\[
(b^\varepsilon_H \nabla W, \nabla(\omega^2 W))_D=(b^\varepsilon_H \nabla W, \nabla(\omega^2 W-I(\omega^2 W))
)_D+(f,I(\omega^2 W))_D.
\]
Using the superapproximation~ \eqref{superapp} and the inverse estimate, we get
\begin{align*}
(b^\varepsilon_H \nabla W, \nabla(\omega^2 W))_D
& \leq C \|\nabla W\|_{L^2(K_1)} \| \nabla(\omega^2 W)-\nabla I(\omega^2 W)\|_{L^2(K_1)}+\|f\|_{H^{-1}\left(K_1\right)}\| I(\omega^2 W) \|_{H^1\left(K_1\right)}\\
&\le C\left(d^{-1}\|W\|_{L^2\left(K_1\right)}+\|f\|_{H^{-1}\left(K_1\right)}\right)\left(\|\nabla(\omega W)\|_{L^2\left(K_1\right)}+d^{-1}\|W\|_{L^2\left(K_1\right)}\right),
\end{align*}
where we have assumed $h\leq d$.

Combining the above inequalities, there holds
\begin{align*}
\lambda\nm{\nabla(\omega W)}{L^2(K_1)}^2&\le(b^\varepsilon_H\nabla(\omega W),\nabla(\omega W))_D\\
&\le\frac{\lambda}{2}\|\nabla(\omega W)\|^2_{L^2\left(K_1\right)}+C\left(d^{-2}\|W\|_{L^2\left(K_1\right)}^2+\|f\|_{H^{-1}(K_1)}^2\right).
\end{align*}
This gives~\eqref{eq:localest}.
\end{proof}

We are ready to give the localized energy error estimate. Firstly, we define a local Galerkin projection operator $R_h:H_<^1(K) \rightarrow X_{h,H}\cap H_<^1(K)$ such that
$$(b^\varepsilon_H \nabla R_h v,\nabla V)_D=(a^\varepsilon \nabla v,\nabla V)_D \qquad \text{for all }V\in X_{h,H}\cap H_<^1(K).$$

The interior estimate relies on the Meyers regularity for linear parabolic equations, which is given in \cite[Chapter 2, Theorem 2.5]{bensoussan_asymptotic_2011}.

\begin{lemma}
	Let $D$ be a bounded domain, $f\in L^2(0,T;H^{-1}(D))$, $u_0\in H_0^1(D)$ and $u\in L^2(0,T;H_0^1(D))$ be the solution of
	\[\left\{\begin{aligned}
	    \partial_t u-\nabla\cdot(A\nabla u)&=f,&&\text{in\;} D_T,\\
	    u|_{t=0}&=u_0,&&\text{in\;} D,
	\end{aligned}\right.\]
	where $A\in [L^\infty(D_T)]^{d\times d}$ is uniformly elliptic with corresponding parameters $\lambda$ and $\Lambda$. There exists $p>2$, depending on $\lambda$, $\Lambda$ and $D$, such that if
	$u_0\in W_0^{1,p}(D)$ and $f\in L^p(0,T;W^{-1,p}(D))$, then $u\in L^p(0,T;W_0^{1,p}(D))$. Furthermore,
	$$\Vert u \Vert_{L^p(0,T;W^{1,p}(D))}\leq C\Bigl(\Vert f \Vert_{L^p(0,T;W^{-1,p}(D))}+T^{1/p}\Vert u_0 \Vert_{W{1,p}(D)}\Bigr),$$
	where the constant $C > 0$ depends on $\lambda,\Lambda$ and $D$.
\end{lemma}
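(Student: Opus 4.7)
The plan is to follow a constant-coefficient perturbation strategy---the Meyers argument for elliptic equations, adapted to the parabolic setting as in Giaquinta--Struwe---which is cleaner than the alternative reverse-H\"older/Gehring route and makes the dependence on $\lambda$, $\Lambda$ transparent. The scheme is: bound the constant-coefficient reference operator on $L^p$, treat the variable coefficient $A$ as a perturbation, and absorb for $p$ slightly above $2$.

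First I would record the baseline $L^2$ estimate: standard energy methods yield
\[
\nm{\na u}{L^2(D_T)}\le C\Lr{\nm{f}{L^2(0,T;H^{-1}(D))}+\nm{u_0}{L^2(D)}},
\]
establishing the claim at the interpolation endpoint $p=2$. Next, choose the scalar reference $\mu=(\lambda+\Lambda)/2$ and consider the problem $\partial_t w-\mu\Delta w=F$ on $D_T$ with zero initial and lateral data; parabolic Calder\'on--Zygmund theory (the $L^p$ boundedness of heat-kernel Riesz transforms) yields
\[
\nm{\na w}{L^p(D_T)}\le\Gamma_p\nm{F}{L^p(0,T;W^{-1,p}(D))},\qquad 1<p<\infty,
\]
with $p\mapsto\Gamma_p$ continuous and the explicit value $\Gamma_2=1/\mu$ coming from the $L^2$ energy identity.

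I would then rewrite the equation for $u$ as $\partial_t u-\mu\Delta u=f+\na\cdot((\mu I-A)\na u)$ and apply the reference estimate to obtain
\[
\nm{\na u}{L^p(D_T)}\le\Gamma_p\nm{f}{L^p(0,T;W^{-1,p})}+\Gamma_p\nm{\mu I-A}{L^\infty}\nm{\na u}{L^p(D_T)}.
\]
The choice of $\mu$ ensures that $\Gamma_2\nm{\mu I-A}{L^\infty}<1$ (in the symmetric case this is exactly $(\Lambda-\lambda)/(\Lambda+\lambda)$, with analogous bounds in general). Continuity of $\Gamma_p$ at $p=2$ then yields $p_0>2$ such that $\Gamma_p\nm{\mu I-A}{L^\infty}<1$ for all $p\in[2,p_0]$, allowing the perturbation term to be absorbed to the left. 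The $T^{1/p}\nm{u_0}{W^{1,p}(D)}$ contribution enters by subtracting the constant-coefficient caloric extension of $u_0$, whose $L^p(0,T;W^{1,p})$ norm obeys the desired $T^{1/p}$ bound by the same reference theory.

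The main obstacle is the quantitative continuity $\Gamma_p\to\Gamma_2$ as $p\to 2$, which fixes the admissible range $[2,p_0]$ and hence the gain in integrability. This is the delicate step, typically obtained via Riesz--Thorin interpolation between the $L^2$ bound and an $L^q$ bound with $q>2$ from Calder\'on--Zygmund theory, with constants tracked carefully so that the strict inequality $\Gamma_p\nm{\mu I-A}{L^\infty}<1$ persists. The remainder---triangle inequality, caloric-extension bounds for the initial data, and the standard data-to-solution estimate---is routine bookkeeping.
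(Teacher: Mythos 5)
The paper offers no proof of this lemma: it is quoted directly from \cite[Chapter~2, Theorem~2.5]{bensoussan_asymptotic_2011}, so there is no in-paper argument to compare yours against. Your sketch is the standard Meyers-type perturbation proof (freeze a scalar reference coefficient $\mu$, use maximal $L^p$ regularity for $\partial_t-\mu\Delta$ in divergence form, absorb the perturbation $\nabla\cdot((A-\mu I)\nabla u)$ for $p$ close to $2$ via continuity of the operator norm at $p=2$), which is essentially the argument behind the cited result, so in substance you have reproduced the intended proof rather than found a different one. Two points in your outline are thinner than they should be. First, Riesz--Thorin does not apply directly to $F\mapsto\nabla w$ with $F$ ranging over $W^{-1,p}$; you need to represent $F=\nabla\cdot G$ and interpolate the linear map $G\mapsto\nabla w$ between $L^2(D_T)$ (where the energy identity gives norm $1/\mu$) and $L^{q}(D_T)$ for some fixed $q>2$, and that endpoint $L^q$ bound is itself the nontrivial input: on a merely bounded (even Lipschitz) domain it holds only for a restricted range of $q$ and with a constant depending on $D$, which is precisely why the final exponent $p$ and constant $C$ depend on $D$. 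This dependence should be acknowledged rather than attributed to ``parabolic Calder\'on--Zygmund theory'' for all $1<p<\infty$. Second, the absorption step presupposes $\nm{\nabla u}{L^p(D_T)}<\infty$ a priori; one must either run the estimate for regularized data and coefficients and pass to the limit, or invoke a continuity-in-$p$ argument. Neither issue is fatal --- both are handled in the reference --- but as written they are gaps in the bookkeeping, and the non-symmetric case of the smallness condition $\Gamma_2\nm{\mu I-A}{L^\infty}<1$ deserves the explicit computation using $\xi\cdot A\xi\ge|A\xi|^2/\Lambda$ with $\mu>\Lambda/2$ rather than the parenthetical ``analogous bounds in general.''
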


\begin{theorem}\label{thm:localest1}
If $u^\varepsilon\in \ell^2(0,T;W^{1,p}(K))$ for certain $p>2$, then
\begin{equation}\label{eq:3.6}
\begin{aligned}
\|\nabla (u^\varepsilon-U)\|_{\ell^2(0,T;L^2(K_0))}& \leq  C \bigg( \inf_{V\in [X_{h,H}\cap H_<^1(K)]^m} \| \nabla(\omega u^\varepsilon-V) \|_{\ell^2(0,T;L^2(K_0))} \\
	&\qquad	+ |K\backslash K_0|^{1/2-1/p}\left(d^{-1}\|u^\varepsilon\|_{\ell^2(0,T;L^p(K))} + \| \nabla u^\varepsilon \|_{\ell^2(0,T;L^p(K))}  \right)\\
	&\qquad	+ d^{-1} \|u^\varepsilon-U\|_{\ell^2(0,T;L^2(K))}+\|\partial_t u^\varepsilon-\delta U\|_{\ell^2(0,T;H^{-1}(K))}\bigg),
\end{aligned}
\end{equation}
where $\omega\in C^\infty(K)\cap H_<^1(K)$ with $\omega=1$ on $K_1$ and $|\nabla^j \omega|\leq Cd^{-j},~j=0,1$.
\end{theorem}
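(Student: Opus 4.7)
The plan is to freeze time and invoke the interior Caccioppoli bound of Lemma~\ref{lemma3} step by step in $k$, with both $\partial_t u^\varepsilon_k$ and $\delta U_k$ moved onto an effective right-hand side. This reduces the transient estimate to a family of elliptic interior estimates and bypasses any argument that differentiates the scheme in time; in particular the factor $\Delta t^{-1/2}$ that such arguments usually produce does not appear here.

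Fix $k$ and let $V_k \in X_{h,H}\cap H_<^1(K)$ be arbitrary (to be taken, e.g., as a quasi-interpolant of $\omega u^\varepsilon_k$ at the end). Since $\omega\equiv 1$ on $K_0$, on that set $u^\varepsilon_k = \omega u^\varepsilon_k$, so writing
\[
u^\varepsilon_k - U_k = (\omega u^\varepsilon_k - V_k) + (V_k - U_k)
\]
isolates the local approximation term $\|\nabla(\omega u^\varepsilon_k - V_k)\|_{L^2(K_0)}$, which becomes the first term in~\eqref{eq:3.6} after squaring, summing in $k$ and taking the infimum, and reduces the task to controlling $\|\nabla W_k\|_{L^2(K_0)}$ for $W_k := V_k - U_k \in X_{h,H}$. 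Subtracting~\eqref{test} from the weak form of the multiscale equation at $t_k$, one has for every $\phi \in X_{h,H}\cap H_<^1(K_1)$
\[
(b^\varepsilon_H \nabla W_k, \nabla \phi)_D = (\delta U_k - \partial_t u^\varepsilon_k, \phi)_D + (b^\varepsilon_H \nabla V_k - a^\varepsilon \nabla u^\varepsilon_k, \nabla \phi)_D.
\]
Using $b^\varepsilon_H - a^\varepsilon = (1-\rho)(A_H - a^\varepsilon)$, which vanishes on $K_0$, and $\omega\equiv 1$ on $K_1$ (which contains the support of $\phi$), the last bracket decomposes on $K_1$ as $b^\varepsilon_H\nabla(V_k - \omega u^\varepsilon_k) + (1-\rho)(A_H-a^\varepsilon)\nabla u^\varepsilon_k$, the mismatch being supported in $K_1\setminus K_0$.

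Next apply a straightforward vector-source extension of Lemma~\ref{lemma3}: the proof of that lemma estimates $(f,I(\omega^2 W))$ via the superapproximation~\eqref{superapp}, and a term of the form $(g,\nabla I(\omega^2 W))$ is handled identically by $\|g\|_{L^2(K_1)}\|\nabla I(\omega^2 W)\|_{L^2(K_1)}$. This yields
\[
\|\nabla W_k\|_{L^2(K_0)} \leq C\bigl(d^{-1}\|W_k\|_{L^2(K_1)} + \|\delta U_k - \partial_t u^\varepsilon_k\|_{H^{-1}(K_1)} + \|b^\varepsilon_H \nabla V_k - a^\varepsilon \nabla u^\varepsilon_k\|_{L^2(K_1)}\bigr).
\]
The triangle inequality splits $\|W_k\|_{L^2(K_1)}$ into $\|\omega u^\varepsilon_k - V_k\|_{L^2(K_1)} + \|u^\varepsilon_k - U_k\|_{L^2(K_1)}$; squaring and summing delivers the $d^{-1}\|u^\varepsilon-U\|_{\ell^2(0,T;L^2(K))}$ term of~\eqref{eq:3.6}, while the quasi-interpolation piece will be absorbed together with the approximation contribution below.

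The main obstacle is the coefficient-mismatch integrand $(1-\rho)(A_H-a^\varepsilon)\nabla u^\varepsilon_k$ supported in $K_1\setminus K_0$: since $u^\varepsilon$ is only globally $H^1$, a direct $L^2$ bound leaves $\nabla u^\varepsilon$ uncontrollable. This is the point where the parabolic Meyer regularity stated just before the theorem enters: one gains $u^\varepsilon\in \ell^2(0,T;W^{1,p}(K))$ for some $p>2$, and Hölder's inequality on $K_1\setminus K_0\subset K\setminus K_0$ produces exactly the factor $|K\setminus K_0|^{1/2-1/p}\|\nabla u^\varepsilon\|_{L^p(K)}$. The twin term $d^{-1}|K\setminus K_0|^{1/2-1/p}\|u^\varepsilon\|_{L^p(K)}$ emerges when controlling $\|b^\varepsilon_H\nabla(V_k - \omega u^\varepsilon_k)\|_{L^2(K_1\setminus K_0)}$ through a quasi-interpolation of $\omega u^\varepsilon_k$: the identity $\nabla(\omega u^\varepsilon) = \omega\nabla u^\varepsilon + u^\varepsilon\nabla\omega$ together with $|\nabla\omega|\leq Cd^{-1}$ and Hölder deliver that contribution. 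Squaring, summing in $k$, and taking the infimum over $V$ complete~\eqref{eq:3.6}.
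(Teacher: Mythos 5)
Your proof is correct, and it follows the same overall strategy as the paper — freeze time, treat $\partial_t u^\varepsilon_k-\delta U_k$ as elliptic data, apply the interior estimate of Lemma~\ref{lemma3}, and invoke Meyers' regularity via H\"older on $K\backslash K_0$ — but the technical execution differs in one genuine way. The paper compares $U_k$ with the local Galerkin projection $R_h(\omega u^\varepsilon_k)$, where $R_h$ is defined by $(b^\varepsilon_H\nabla R_h v,\nabla V)_D=(a^\varepsilon\nabla v,\nabla V)_D$; the coefficient mismatch $(a^\varepsilon-b^\varepsilon_H)\nabla v$, supported in $K\backslash K_0$, is then absorbed once and for all into a quasi-optimality bound for $R_h$, so that the error equation tested against $V\in X_{h,H}\cap H_<^1(K_1)$ has the clean form $(b^\varepsilon_H\nabla(U_k-R_h(\omega u^\varepsilon_k)),\nabla V)_D=(\partial_t u^\varepsilon_k-\delta U_k,V)_D$ and Lemma~\ref{lemma3} applies verbatim. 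You instead compare $U_k$ with an arbitrary $V_k\in X_{h,H}\cap H_<^1(K)$, which pushes the mismatch $(1-\rho)(A_H-a^\varepsilon)\nabla u^\varepsilon_k$ and the consistency term $b^\varepsilon_H\nabla(V_k-\omega u^\varepsilon_k)$ onto the right-hand side as a divergence-form source; this obliges you to extend Lemma~\ref{lemma3} to an additional term $(g,\nabla\phi)_D$ controlled by $\|g\|_{L^2(K_1)}$. That extension is indeed straightforward from the superapproximation bound~\eqref{superapp} exactly as you indicate, so nothing breaks; what the paper's route buys is that Lemma~\ref{lemma3} can be used as stated, while your route avoids introducing $R_h$ and its quasi-optimality lemma. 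Two small points: (i) the absorption of $d^{-1}\|V_k-\omega u^\varepsilon_k\|_{L^2(K_1)}$ into the approximation term needs the Poincar\'e inequality on $K$ (valid since $\omega u^\varepsilon_k-V_k\in H_<^1(K)$ and $\operatorname{diam}K\sim d$), which you should state explicitly — this is exactly the step the paper flags; (ii) your attribution of the $d^{-1}|K\backslash K_0|^{1/2-1/p}\|u^\varepsilon\|_{\ell^2(0,T;L^p(K))}$ term is muddled: on $K_1$ one has $\omega\equiv 1$, so $u^\varepsilon\nabla\omega$ contributes nothing to your source term, and your argument simply does not produce that term (which is harmless, since omitting a nonnegative term on the right of~\eqref{eq:3.6} only sharpens the bound); in the paper it arises from $\|\nabla(\omega u^\varepsilon_k)\|_{L^2(K\backslash K_0)}$ in the quasi-optimality estimate for $R_h$, where $\nabla\omega\neq 0$ on $K\backslash K_1$.
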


\begin{proof}
For arbitrary $V\in X_{h,H}\cap H_{<}^1(K)$,
\[
	\begin{aligned}
		\left(b^\varepsilon_H \nabla\left(R_h v-V\right), \nabla\left(R_h v-V\right)\right)_D&=\left(a^\varepsilon \nabla v-b^\varepsilon_H \nabla V, \nabla\left(R_h v-V\right)\right)_D\\
		&=\left(\left(a^\varepsilon-b^\varepsilon_H\right) \nabla v, \nabla\left(R_h v-V\right)\right)_D+\left(b^\varepsilon_H \nabla(v-V), \nabla\left(R_h v-V\right)\right)_D.
	\end{aligned}
\]
This gives 
\[
	\|\nabla\left(R_h v-V\right)\|_{L^2(K)} \leq C\left(\|\nabla v\|_{L^2\left(K \backslash K_0\right)}+\|\nabla(v-V)\|_{L^2(K)}\right).
\]
By the triangle inequality and the arbitrariness of $V$, it holds that
\[
	\|\nabla\left(v-R_h v\right)\|_{L^2(K)} \leq C \left( \inf_{V\in X_{h,H}\cap H_<^1(K)}\|\nabla(v-V)\|_{L^2(K)}+\|\nabla v\|_{L^2(K\backslash K_0)}\right).
\]
	Set $v=\omega u_k^\varepsilon$, we obtain
\[
\begin{aligned}
\|\nabla (I-R_h)(\omega u_k^\varepsilon)\|_{L^2(K)} &\leq C \left( \inf_{V\in X_{h,H}\cap H_<^1(K)}\|\nabla(\omega u_k^\varepsilon-V)\|_{L^2(K)}+\|\nabla (\omega u_k^\varepsilon)\|_{L^2(K\backslash K_0)}\right)\\
		&\leq C \bigg( \inf_{V\in X_{h,H}\cap H_<^1(K)}\|\nabla(\omega u_k^\varepsilon-V)\|_{L^2(K)}\\
		&\qquad +|K\backslash K_0|^{1/2-1/p}(d^{-1}\|u_k^\varepsilon\|_{L^p(K)} + \| \nabla u_k^\varepsilon \|_{L^p(K)} )\bigg).\\
\end{aligned}
\]
	
Note that for all $V\in X_{h,H}\cap H_<^1(K_1)$, we have
\[
\left(b^\varepsilon_H \nabla\left(U_k-R_h\left(\omega u_k^{\varepsilon}\right)\right), \nabla V\right)_D=\left(f_k-\delta U_k, V\right)_D-\left(a^{\varepsilon} \nabla u_k^\varepsilon, \nabla V\right)_D=\left(\partial_t u_k^\varepsilon-\delta U_k, V\right)_D.
\]

By Lemma~\ref{lemma3}, we obtain
	\begin{align*}
		 \| \nabla&\left(U_k-R_h\left(\omega u_k^\varepsilon\right)\right)  \|_{L^2\left(K_0\right)}\leq C\left(d^{-1}\|U_k-R_h\left(\omega u_k^\varepsilon\right)\|_{L^2\left(K_1\right)}+\|\partial_t u_k^\varepsilon-\delta U_k\|_{H^{-1}\left(K_1\right)}\right)\\
		 &\leq C\left( d^{-1}\|\omega u_k^\varepsilon-R_h\left(\omega u_k^\varepsilon\right)\|_{L^2(K)}+d^{-1}\|u_k^\varepsilon-U_k\|_{L^2\left(K_1\right)}+\|\partial_t u_k^\varepsilon-\delta U_k\|_{H^{-1}\left(K_1\right)}\right)\\
		&\leq C\left(\|\nabla\left(I-R_h\right)\left(\omega u_k^\varepsilon\right)\|_{L^2(K)}+d^{-1}\|u_k^\varepsilon-U_k\|_{L^2(K)}+\|\partial_t u_k^\varepsilon-\delta U_k\|_{H^{-1}(K)}\right),\\
	\end{align*}
where we have used the Poincar\'e inequality in the last line. By the triangle inequality,
	\begin{align*}
	\|\nabla\left(u_k^\varepsilon-U_k\right)\|_{L^2\left(K_0\right)} &\leq\|\nabla\left(I-R_h\right)\left(\omega u_k^\varepsilon\right)\|_{L^2(K)}+\|\nabla\left(U_k-R_h\left(\omega u_k^\varepsilon\right)\right)\|_{L^2\left(K_0\right)}\\
	& \leq  C \bigg( \inf_{V\in X_{h,H}\cap H_<^1(K)} \| \nabla(\omega u^\varepsilon_k-V) \|_{L^2(K_0)} \notag \\
		&\qquad+ |K\backslash K_0|^{1/2-1/p}\left(d^{-1}\|u^\varepsilon_k\|_{L^p(K)} + \| \nabla u^\varepsilon_k \|_{L^p(K)}  \right)\\
		&\qquad+ d^{-1} \|u^\varepsilon_k-U_k\|_{L^2(K)}+\|\partial_t u^\varepsilon_k-\delta U_k\|_{H^{-1}(K)}\bigg).
	\end{align*}
	Sum up $k$ from 1 to $m$, we obtain \eqref{eq:3.6}.
\end{proof}

Under the same condition of Theorem~\ref{thm:localest1}, we may bound the last two terms in the right-hand side of~\eqref{eq:3.6}.
\begin{remark}\label{rmk:local}
Using H\"older inequality and the estimate~\eqref{thm11}, we obtain
\[
	\begin{aligned}
		\|u^{\varepsilon}-U\|_{\ell^2(0, T ; L^2(K))} &\leq \|u^{\varepsilon}-u\|_{\ell^2(0, T; L^2\left(K\right))}+\|u-U\|_{\ell^2(0, T ; L^2(D))}\\
		&\le C\abs{K}^{1/2-1/p}\Lr{\nm{u^{\varepsilon}}{\ell^2(0,T;L^p(K))}
		+\nm{u}{\ell^2(0,T;L^p(K))}}\\
		&\qquad+C(u)\left(\Delta t+H^2+\eta(K)^2+e(\mathrm{HMM})\right).
	\end{aligned}
\]

Note that for any $0<t\le T$ and for $p>2$, denoting by $p'$ the conjugate index of $p$, we get 
\begin{align*}
\nm{\partial_t(u^{\varepsilon}-u)(\cdot,t)}{H^{-1}(K)}
&=\sup_{\varphi\in H_0^1(K)}\dfrac{\abs{(\partial_t(u^{\varepsilon}-u)(\cdot,t),\varphi)}}{\nm{\na\varphi}{L^2(K)}}=\sup_{\varphi\in H_0^1(K)}\dfrac{\abs{(a^\varepsilon\na u^{\varepsilon}-A\na u)(\cdot,t),\na\varphi)}}{\nm{\na\varphi}{L^2(K)}}\\
 &\le\Lambda\sup_{\varphi\in H_0^1(K)}\dfrac{\Lr{\nm{\na u^{\varepsilon}(\cdot,t)}{L^p(K)}+\nm{\na u(\cdot,t)}{L^p(K)}}\nm{\na\varphi}{L^{p'}(K)}}{\nm{\na\varphi}{L^2(K)}}\\
 &\le\varLambda\abs{K}^{1/p'-1/2}
 \Lr{\nm{\na u^{\varepsilon}(\cdot,t)}{L^p(K)}+\nm{\na u(\cdot,t)}{L^p(K)}}\\
 &=\varLambda\abs{K}^{1/2-1/p}
 \Lr{\nm{\na u^{\varepsilon}(\cdot,t)}{L^p(K)}+\nm{\na u(\cdot,t)}{L^p(K)}}.
\end{align*}

Using~\eqref{thm13} and note that $u^\varepsilon\in \ell^2(0,T;W_0^{1,p}(K))$, we get
\[
\begin{aligned}
		\|\partial_t u^\varepsilon-\delta U\|_{\ell^2(0,T;H^{-1}\left(K\right))} &\leq\|\partial_t u^\varepsilon-\partial_t u\|_{\ell^2(0,T; H^{-1}\left(K\right))}+\|\partial_t u-\delta U\|_{\ell^2(0,T; H^{-1}(K))}\\
		&\le C(u^\varepsilon,u)\abs{K}^{1/2-1/p}+C(u)\Lr{\Delta t+H^2+\eta(K)+e(\mathrm{HMM})}.
\end{aligned}
\]
\end{remark}
Theorem~\ref{thm:localest1} relies on Meyers' regularity estimate, which introduce an additional factor $\abs{K\backslash K_0}^{1/2-1/p}$. This factor cannot be ignored, as $u^\varepsilon$ may exhibit severe oscillation. The next theorem shows that choosing $\rho=\chi_K$ removes this factor, thereby eliminating the pollution originating from outside $K_0$. This observation clarifies the poor performance of $\rho=\chi_{K_0}$ observed in the elliptic-case experiments of \cite{huang_concurrent_2018}. In practice, for continuous transition functions, the pollution outside $K_0$ remains acceptable.
\begin{theorem}\label{thm:local}
If $\rho=\chi_K$, then
	\begin{equation}
	\label{thm2}
	\begin{aligned}
		\| \nabla(u^\varepsilon-U)\| _{\ell^2(0,T;L^2(K_0))} &\leq C\left(\inf_{V\in[X_{h,H}]^m} \| \nabla(u^\varepsilon-V)\|_{\ell^2(0,T;L^2(K))}\right. \\
		&\qquad\qquad + d^{-1} \| u^\varepsilon-U\| _{\ell^2(0,T;L^2(K))} + \| \partial_t u^\varepsilon-\delta U \|_{\ell^2(0,T;H^{-1}(K))} \bigg).\\
	\end{aligned}
	\end{equation}
\end{theorem}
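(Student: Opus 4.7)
The plan is to exploit a special feature of the choice $\rho=\chi_K$: the hybrid coefficient $b^\varepsilon_H$ coincides with $a^\varepsilon$ throughout $K$, so for any test function $W\in X_{h,H}\cap H_<^1(K_1)$ (supported in $K_1\subset K$) the bilinear forms $(b^\varepsilon_H\nabla\cdot,\nabla W)_D$ and $(a^\varepsilon\nabla\cdot,\nabla W)_D$ agree. This is precisely the structural reason the cut-off function $\omega$, and with it the Meyers factor $|K\setminus K_0|^{1/2-1/p}$, in Theorem~\ref{thm:localest1} can be removed here.

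First I would derive an error identity for $U_k-V_k\in X_{h,H}$ with arbitrary $V_k\in X_{h,H}$. Subtracting the multiscale equation from~\eqref{test} at a test function $W\in X_{h,H}\cap H_<^1(K_1)$ and using the coefficient coincidence on the support of $W$ yields
$$(b^\varepsilon_H\nabla(U_k-V_k),\nabla W)_D=(\partial_t u^\varepsilon_k-\delta U_k,W)_D+(b^\varepsilon_H\nabla(u^\varepsilon_k-V_k),\nabla W)_D.$$
Viewed as a functional on $H_0^1(K_1)$, the right-hand side has dual norm at most $\|\partial_t u^\varepsilon_k-\delta U_k\|_{H^{-1}(K_1)}+\Lambda\|\nabla(u^\varepsilon_k-V_k)\|_{L^2(K_1)}$.

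Next I would apply a mild generalization of Lemma~\ref{lemma3} in which the source $(f,V)_D$ is replaced by a general bounded functional $\langle F,V\rangle$ on $H_0^1(K_1)$; its proof carries over verbatim, replacing $(f,I(\omega^2 W))_D$ with $\langle F,I(\omega^2 W)\rangle\le\|F\|_{H^{-1}(K_1)}\|I(\omega^2 W)\|_{H^1(K_1)}$ in the superapproximation step. Applied to $W=U_k-V_k$ this gives
$$\|\nabla(U_k-V_k)\|_{L^2(K_0)}\le C\bigl(d^{-1}\|U_k-V_k\|_{L^2(K_1)}+\|\partial_t u^\varepsilon_k-\delta U_k\|_{H^{-1}(K_1)}+\|\nabla(u^\varepsilon_k-V_k)\|_{L^2(K_1)}\bigr).$$
Combining with the triangle inequalities $\|\nabla(u^\varepsilon_k-U_k)\|_{L^2(K_0)}\le\|\nabla(u^\varepsilon_k-V_k)\|_{L^2(K)}+\|\nabla(U_k-V_k)\|_{L^2(K_0)}$ and $\|U_k-V_k\|_{L^2(K)}\le\|u^\varepsilon_k-U_k\|_{L^2(K)}+\|u^\varepsilon_k-V_k\|_{L^2(K)}$ produces the pointwise-in-$k$ estimate. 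Squaring, summing over $k=1,\dots,m$, and passing to the infimum over $V\in[X_{h,H}]^m$ gives~\eqref{thm2}.

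The main obstacle I anticipate is the stray term $d^{-1}\|u^\varepsilon_k-V_k\|_{L^2(K)}$ that surfaces in the combined bound but is not present in~\eqref{thm2}. Since $V_k$ is free in the infimum, the natural remedy is to specialize to a Scott-Zhang (or Cl\'ement) quasi-interpolant of $u^\varepsilon_k$, for which one has the simultaneous local bounds $\|u^\varepsilon_k-V_k\|_{L^2(K)}\le Ch\|\nabla u^\varepsilon_k\|_{L^2(K^\ast)}$ and $\|\nabla(u^\varepsilon_k-V_k)\|_{L^2(K)}\le C\|\nabla u^\varepsilon_k\|_{L^2(K^\ast)}$ on a slight enlargement $K^\ast\supset K$. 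Under the standing hypothesis $h\le d$ of Lemma~\ref{lemma3}, the factor $h/d\le 1$ collapses $d^{-1}\|u^\varepsilon_k-V_k\|_{L^2(K)}$ to the same order as $\|\nabla(u^\varepsilon_k-V_k)\|_{L^2(K)}$, allowing it to be folded into the energy infimum. Ensuring this absorption is clean, so that no Meyers-type exponent sneaks back in, is the delicate step that distinguishes Theorem~\ref{thm:local} from Theorem~\ref{thm:localest1}.
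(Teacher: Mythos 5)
Your overall architecture is sound and is in fact a somewhat leaner route than the paper's: testing the error equation for $U_k-V_k$ against functions supported in $K_1$, using $b^\varepsilon_H=a^\varepsilon$ there, and invoking Lemma~\ref{lemma3} with a general $H^{-1}(K_1)$ right-hand side lets you bypass the local Ritz projection $R_h$ and the cut-off multiplication $\omega(u^\varepsilon_k-\chi_k)$ that the paper routes through. The error identity and the verbatim generalization of Lemma~\ref{lemma3} are both correct.

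The gap is exactly where you flag it: the absorption of $d^{-1}\|u^\varepsilon_k-V_k\|_{L^2(K)}$, and the Scott--Zhang remedy does not close it. Scott--Zhang gives $\|u^\varepsilon_k-V_k\|_{L^2(K)}\le Ch\|\nabla u^\varepsilon_k\|_{L^2(K^\ast)}$, so with $h\le d$ you obtain $d^{-1}\|u^\varepsilon_k-V_k\|_{L^2(K)}\le C(h/d)\|\nabla u^\varepsilon_k\|_{L^2(K^\ast)}\le C\|\nabla u^\varepsilon_k\|_{L^2(K^\ast)}$. This is \emph{not} of the same order as $\|\nabla(u^\varepsilon_k-V_k)\|_{L^2(K)}$: the bound you land on involves the full local energy of $u^\varepsilon$, an $O(1)$ quantity, whereas the term $\inf_{V}\|\nabla(u^\varepsilon_k-V)\|_{L^2(K)}$ appearing in \eqref{thm2} is a best-approximation error that is expected to be small. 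There is no simultaneous quasi-interpolation estimate of the form $\|u-I u\|_{L^2}\le Ch\|\nabla(u-Iu)\|_{L^2}$ that would let you trade the $L^2$ error for the gradient \emph{error} rather than the gradient itself, so your argument proves only a strictly weaker statement polluted by $\|\nabla u^\varepsilon\|_{\ell^2(0,T;L^2(K))}$. The fix the paper uses, and the one you need, is different: choose $V_k=\chi_k$ to be the $H^1(K)$-seminorm best approximation \emph{normalized so that} $\int_K\chi_k\,\mathrm{d}x=\int_K u^\varepsilon_k\,\mathrm{d}x$, and apply the Poincar\'e--Wirtinger inequality on $K$, whose constant scales like $\operatorname{diam}(K)\sim d$; this yields $d^{-1}\|u^\varepsilon_k-\chi_k\|_{L^2(K)}\le C\|\nabla(u^\varepsilon_k-\chi_k)\|_{L^2(K)}=C\inf_{V}\|\nabla(u^\varepsilon_k-V)\|_{L^2(K)}$ with no spurious factor. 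With that substitution in place of the Scott--Zhang step, your argument closes.
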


\begin{proof}
	Set $\chi_k=\underset{V\in X_{h,H}}{\operatorname{arg\,min}} \|\nabla(u_k^\varepsilon-V)\|_{L^2(K)}$ with $\int_K\chi_k \mathrm{d}x=\int_K u_k^\varepsilon \mathrm{d}x$. Define a cut-off function $\omega\in C^\infty(K)\cap H_<^1(K)$ with $\omega=1$ on $K_1$ and
	$|\nabla^j \omega|\leq Cd^{-j},~j=0,1$.
	$$
	\begin{aligned}
		\|\nabla(u^\varepsilon_k-U_k)\|_{L^2(K_0)}&\leq \|\nabla(\omega u^\varepsilon_k-\omega\chi_k)-\nabla R_h(\omega u_k^\varepsilon-\omega \chi_k)\|_{L^2(K)}\\
		&\qquad +\|\nabla(U_k-\chi_k)-\nabla R_h(\omega u_k^\varepsilon-\omega \chi_k)\|_{L^2(K_0)}.\\
	\end{aligned}
	$$
	By $\int_K (u^\varepsilon_k-\chi_k) \mathrm{d}x=0$ and Poincaré inequality on $K$, we have
\begin{equation}\label{eq:thm3}
\begin{aligned}
		\|\nabla(\omega u^\varepsilon_k-\omega\chi_k)-\nabla R_h(\omega u_k^\varepsilon-\omega \chi_k)\|_{L^2(K)} &\leq
		C\|\nabla (\omega u_k-\omega \chi_k)\|_{L^2(K)}\\
		&\leq C\left( \|\nabla(u_k^\varepsilon-\chi_k)\|_{L^2(K)}+d^{-1}\|u_k^\varepsilon-\chi_k\|_{L^2(K)}  \right)\\
		&\leq C \|\nabla(u_k^\varepsilon-\chi_k)\|_{L^2(K)}\\
		&\leq C \inf_{V\in X_{h,H}}\| \nabla(u_k^\varepsilon-V) \|_{L^2(K)}, 
	\end{aligned}
\end{equation}
where we use the relation
\[\nm{\nabla R_hv}{L^2(K)}\le\frac{\Lambda}{\lambda}\nm{\nabla v}{L^2(K)},\qquad\text{for all}\quad v\in H^1_<(K)\]
in the first step. Noting that $\omega=1$ and $b^\varepsilon_H=a^\varepsilon$ on $K_1$, for all $V\in X_{h,H}\cap H_<^1(K_1)$, we obtain
\[
\begin{aligned}
		&\left( b^\varepsilon_H \nabla \left(U_k-\chi_k-R_h(\omega u_k^\varepsilon-\omega \chi_k)\right),\nabla V \right)_D\\
		&=(f_k-\delta U_k,V)_D-(b^\varepsilon_H\nabla\chi_k,\nabla V)_D-\left(a^{\varepsilon} \nabla\left(\omega u_k^{\varepsilon}-\omega \chi_k\right), \nabla V\right)_D\\
		&=\left(f_k-\delta U_k, V\right)_D-\left(a^\varepsilon \nabla u_k^\varepsilon, \nabla V\right)_D=\left(\partial_t u_k^\varepsilon-\delta U_k, V\right)_D.
	\end{aligned}
\]
By Lemma \ref{lemma3} and invoking the Poincaré inequality again, we obtain
\[
	\begin{aligned}
		&\|\nabla\left(U_k-\chi_k\right)-\nabla R_h\left(\omega u_k^\varepsilon-\omega \chi_k\right)\|_{L^2\left(K_0\right)}\\
		&\leq C\left(d^{-1}\|U_k-\chi_k-R_h\left(\omega u_k^{\varepsilon}-\omega \chi_k\right)\|_{L^2\left(K_1\right)}+\|\partial_t u_k^\varepsilon-\delta U_k\|_{H^{-1}\left(K_1\right)}\right)\\
		&\leq C\left(d^{-1}\|\omega u_k^\varepsilon-\omega \chi_k-R_h\left(\omega u_k^\varepsilon-\omega \chi_k\right)\|_{L^2(K)}+d^{-1}\|u_k^{\varepsilon}-U_k\|_{L^2\left( K_1\right)}+\|\partial_t u_k^\varepsilon-\delta U_k\|_{H^{-1}\left(K_1\right)}\right)\\
		&\leq C\left(\|\nabla(\omega u_k^\varepsilon-\omega \chi_k)-\nabla R_h\left(\omega u_k^{\varepsilon}-\omega \chi_k\right)\|_{L^2(K)}+d^{-1}\|u_k^\varepsilon-U_k\|_{L^2(K_1)}+\|\partial_t u_k^\varepsilon-\delta U_k\|_{H^{-1}\left(K_1\right)}\right).
	\end{aligned}
\]
Substituting~\eqref{eq:thm3} into above inequality again and summing up from $k=1$ to $m$, we obtain~\eqref{thm2}.
\end{proof}

So far, we have not discussed the estimate of $e(\mathrm{HMM})$, the so-called resonance error, which has been studied extensively in~\cite{Ming:2005} and~\cite{mingAnalysisHeterogeneousMultiscale2007}. We also refer to~\cite{MR3451428} for certain more advanced techniques to reduce the resonance error.
%
\section{Numerical examples}~\label{sec:numer}
In this part, we present several numerical examples to validate the accuracy of the proposed method. In all cases, the domain is set as $D=[0,1]^2$ with the source term $f=1$ and the initial value $u_0=x(1-x)y(1-y)$. We consider three representative defects: a well defect, an L-shaped defect, and a porous defect. These geometries are selected to model distinct physical scenarios: for instance, L-shaped defects often arise in problems involving channelized flow, while porous defects are typical of materials with distributed 
inclusions~\cite{oden2000estimation}. We note that, as suggested by the theoretical analysis, the method’s performance may deteriorate when the defect region is excessively large.

The region $K$ is constructed by generating a single-layer mesh surrounding $K_0$. If $K_0$ consists of several disjoint subdomains, each subdomain is treated separately. Consequently, our method is equally applicable to cases with multiple defects. Figure~\ref{K0} illustrates $K_0$ and $K$ for three types of defects. 

The transition function $\rho$ on $K\backslash K_0$ is constructed via linear Lagrange interpolation on the single-layer mesh described above, ensuring continuity and straightforward implementation. This approach, adopted in~\cite{mingNitscheHybridMultiscale2022}, has proven effective in practice. Although Theorem~\ref{thm:local} suggests that the indicator function $\chi_K$ leads to better performance for local $H^1$-estimates, our experiments show that the interpolated transition function performs comparably in local $H^1$-estimates and yields improved results for local 
$L^2$-estimates.

\begin{figure}[!t]
	\centering
	\includegraphics[scale=0.2]{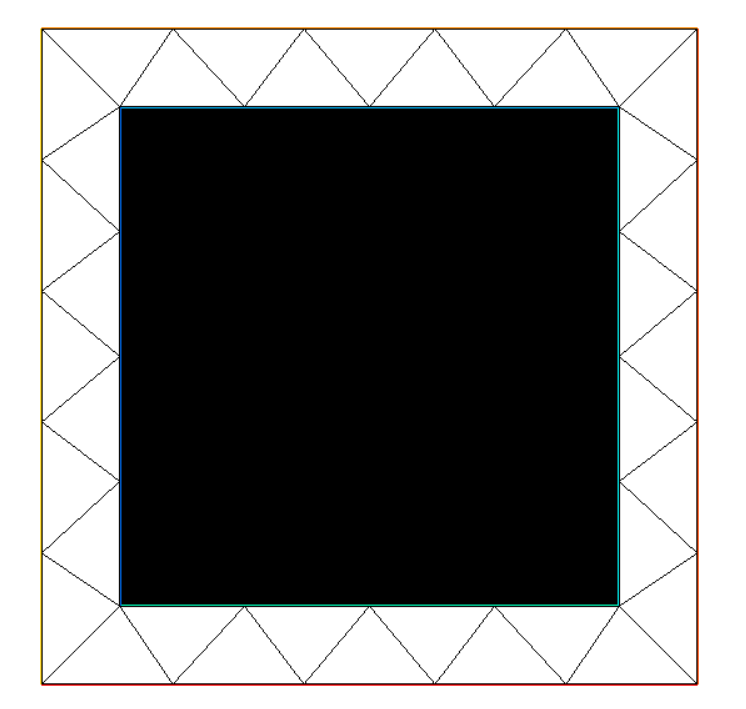}
	\includegraphics[scale=0.2]{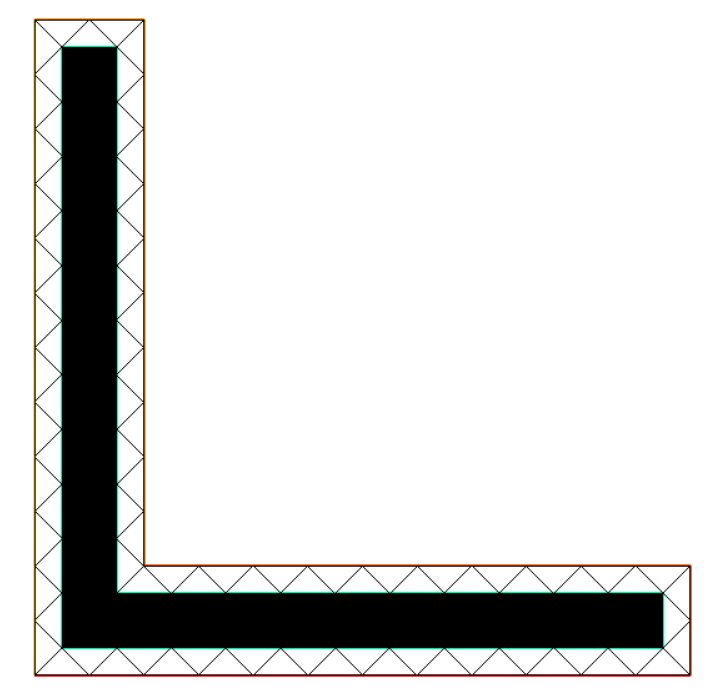}
	\includegraphics[scale=0.2]{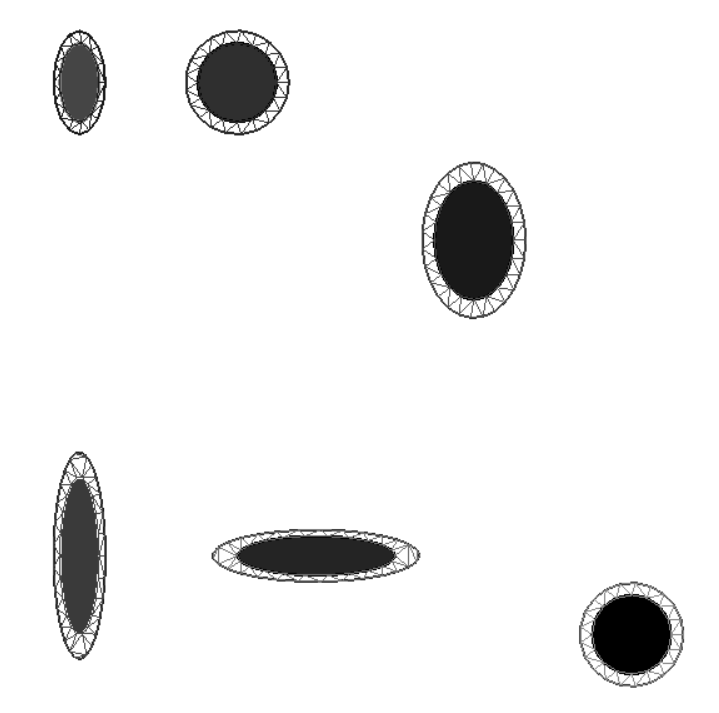}
	\caption{Three defects. Left: well defect. Mid: L-shaped defect. Right: Porous defect}
	\label{K0}
\end{figure}

Relative errors are reported at $T=1$. We consider the global quantities
\[
e_{0}(D\backslash K)=\frac{\Vert U_m-u_m\Vert_{L^2(D\backslash K)}}{\Vert u_m\Vert_{L^2(D\backslash K)}},\qquad
e_{1}(D\backslash K)=\frac{\Vert\nabla (U^m-u_m)\Vert_{L^2(D\backslash K)}}{\Vert \nabla u_m\Vert_{L^2(D\backslash K)}},
\]
and the local quantities
\[
e_{0}(K_0)=\frac{\Vert U_m-u_m^\varepsilon\Vert_{L^2(K_0)}}{\Vert u_m^\varepsilon\Vert_{L^2(K_0)}},
\qquad
e_{1}(K_0)=\frac{\Vert\nabla (U_m-u^\varepsilon_m)\Vert_{L^2(K_0)}}{\Vert \nabla u^\varepsilon_m\Vert_{L^2(K_0)}}
\]
on defect $K_0$, where $m=T/\Delta t$. We fix the time step $\Delta t =0.02$, and vary $h$ and $H$. The reference solution $u^0$ and $u^\varepsilon$ are computed with $h=1/3000$ and $\Delta t=0.01$. All the numerical experiments are conducted on FreeFEM++ \cite{MR3043640}.
\subsection{Example 1: Coefficient with two scales}
The first two-scale diffusion coefficient is taken from \cite{huang_concurrent_2018} with
\begin{equation}\label{eq:aeps}
a^\varepsilon(x) = \frac{(R_1+R_2\sin (2\pi x_1))(R_1+R_2\cos (2\pi x_2))}{(R_1+R_2\sin(2\pi x_1/\varepsilon))(R_1+R_2\sin(2\pi x_2/\varepsilon))}I.
\end{equation}
A direct calculation gives the effective coefficient
\[
A(x)= \frac{(R_1+R_2\sin (2\pi x_1))(R_1+R_2\cos (2\pi x_2))}
	{R_1\sqrt{R_1^2-R_2^2}}I.
\]
Here we select $\varepsilon = 0.01$, $R_1=2.5$ and $R_2=1.5$. We consider the well defect $K_0$ a $0.1\times 0.1$ square, and $K$ a $0.12 \times 0.12$ square in the center of $D$; i.e., $K_0=[0.45,0.55]^2$ and $K=[0.44,0.56]^2$. Therefore in this case, $|K_0|=0.01$ and $|K|=0.0144$.

To investigate the macroscopic error, we fix $\Delta t=0.02$ and $h=1/1000$ while varying $H$. The results, shown in the left columns of Table~\ref{e1wellmac}, correspond to the case $A_H=A$, so that $e(\mathrm{HMM})=0$. We observe that $e_1(D\backslash K)$ converges at first order in $H$. However, the convergence rate of $e_0(D\backslash K)$ drops sharply from approximately $2$ to about $1$, likely due to the combined influence of the defect region $K$ and the time discretization step $\Delta t$ in both the reference and numerical solutions. Overall, the experimental findings align well with the theoretical predictions.
\begin{table}[!t]
	\centering
	\caption{Macroscopic and microscopic errors for two scales coefficient with well defect.}
	\label{e1wellmac}
	\begin{tabular}{lllll|lllll}
		\hline
	$1/H$   & $e_0(D\backslash K)$ & order & $e_1(D\backslash K)$ & order & $1/h$ &  $e_0(K_0)$ & order & $e_1(K_0)$ & order\\ \hline
	\multicolumn{5}{l|}{Macro errors with $h=10^{-3}$} & \multicolumn{5}{l}{Micro errors with $H=10^{-2}$}\\
		20  & 5.54E-03 & ~ & 6.70E-02 & ~ & 200 & 3.72E-03 & ~    & 2.88E-01   & ~       \\
		40  & 1.39E-03 & 1.99  & 3.30E-02 & 1.02 & 400 & 1.07E-03 & 1.80  & 1.44E-01 & 1.00 \\ 
		80  & 4.38E-04 & 1.67  & 1.64E-02 & 1.01 & 800 & 3.42E-04 & 1.63  & 6.26E-02 & 1.20 \\ 
		160 & 2.48E-04 & 0.82  & 8.20E-03 & 1.00 & 1600 & 2.36E-04 & 0.53  & 2.65E-02 & 1.24 \\ 
\hline
	\end{tabular}
\end{table}

As to the microscopic error, we vary $h$ from $1/200$ to $1/1600$ fixing $\Delta t =0.02$ and $H=1/100$. As in the macroscopic case, the homogenization approximation error $e(\mathrm{HMM})=0$. Moreover, since $H^2=10^{-4}\ll h$, the local mesh size $h$ is the dominant error source for $e_1(K_0)$; see Remark \ref{rmk:local}. The results are presented in the right columns of Table~\ref{e1wellmac}, showing that $e_1(K_0)$ exhibits first-order convergence with respect to $h$, while $e_0(K_0)$ tends to a fixed value as $h$ decreases.
\subsection{Example 2: Different transition functions.}
In this example, we investigate the influence of different choices of the transition functions. Since numerical results for $C^1$- and $C^0$-transition functions have been reported in~\cite{mingNitscheHybridMultiscale2022}, we focus here on the discontinuous selections, particularly on the indicator functions. We retain the two-scale diffusion coefficient defined in~\eqref{eq:aeps}, with the well defect given by $K_0=[0.45,0.55]^2$ and $K=[0.44,0.56]^2$.
\begin{table}[!t]
	\centering
	\caption{Macroscopic and microscopic errors for different transition functions.}
	\label{tab:e1rho}
	\begin{tabular}{lllll|lllll}
		\hline
	$1/H$   & $e_0(D\backslash K)$ & order & $e_1(D\backslash K)$ & order & $1/h$ &  $e_0(K_0)$ & order & $e_1(K_0)$ & order\\ \hline
	\multicolumn{5}{l|}{Macro errors with $\rho=\chi_{K_0},h=10^{-3}$} & \multicolumn{5}{l}{Micro errors with $\rho=\chi_{K_0},H=10^{-2}$}\\
		20  & 5.51E-03 & ~ & 6.70E-02 & ~ & 200 & 2.92E-03 & ~ & 2.87E-01   & ~       \\
		40  & 1.36E-03 & 2.02 & 3.29E-02 & 1.03 & 400 & 9.32E-04 & 1.65 & 1.54E-01 & 0.90\\ 
		80  & 3.96E-04 & 1.78 & 1.64E-02 & 1.00 & 800 & 9.55E-04 & -0.04 & 8.45E-02 & 0.87\\ 
		160 & 2.01E-04 & 0.98 & 8.17E-03 & 1.01 & 1600 & 1.06E-03 & -0.15 & 6.21E-02 & 0.44\\  
	\hline
	\multicolumn{5}{l|}{Macro errors with $\rho=\chi_{K},h=10^{-3}$} & \multicolumn{5}{l}{Micro errors with $\rho=\chi_{K},H=10^{-2}$}\\
	20  & 5.52E-03 & ~ & 6.72E-02 & ~ & 200 & 3.54E-03 & ~ & 2.91E-01  & ~       \\
		40  & 1.38E-03 & 2.00 & 3.33E-02 & 1.01 & 400 & 8.58E-04 & 2.04 & 1.48E-01 & 0.98\\ 
		80  & 4.39E-04 & 1.65 & 1.70E-02 & 0.97 & 800 & 8.52E-04 & 0.01 & 6.61E-02 & 1.16\\ 
		160 & 2.55E-04 & 0.78 & 8.41E-03 & 1.01 & 1600 & 1.02E-03 & -0.26 & 2.50E-02 & 1.40\\  
		\hline
	\end{tabular}
\end{table}

We fix $\Delta t=0.02$ and $h=1/1000$ and summarize the macroscopic errors in the left columns of Table \ref{tab:e1rho}. While the results for different transition functions are comparable, $\rho=\chi_{K_0}$ performs slightly better. This observation aligns with Theorem~\ref{thm:global}. Specifically, the factor $\eta(K)$ should more precisely be expressed as $\eta(\mathrm{supp}\,\rho)$. Consequently, when $\rho=\chi_{K_0}$, the area of its support equals to $\abs{K_0}=0.01$, which is slightly smaller than $\abs{K}=0.0144$,  resulting in a smaller contribution from this term.

The behaviour of the microscopic errors is more nuanced, as shown in the right columns of Table \ref{tab:e1rho}. We fix $\Delta t=0.02$ and $H=1/100$. The indicator function $\chi_{K_0}$ yields the poorest performance, consistent with the findings reported in \cite{huang_concurrent_2018}. This can be explained by Theorem \ref{thm:localest1}, which introduces a factor of $\abs{K\backslash K_0}^{1/2-1/p}$ in this case. In contrast, $\chi_K$ performs best for $e_1(K_0)$, in line with Theorem \ref{thm:local}. The linear Lagrange interpolation yields comparable results in practice, as seen in Table~\ref{e1wellmac}. This is reasonable because the principle underlying Theorem \ref{thm:local} is that the coefficient near but outside $K_0$ should remain close to $a^\varepsilon$, a condition that still holds for continuous $\rho$, albeit with mild pollution.

The performance of $e_0(K)$ is poor for the discontinuous transition functions, including both $\chi_{K_0}$ and $\chi_K$. Although a specific theoretical analysis for this case is beyond the scope of the present 
work, this aligns with the general expectation that duality-based $L^2$-estimates typically require the continuity of the coefficient. In view of these observations, we henceforth adopt the linear Lagrange interpolation as the transition function in the subsequent experiments.
\subsection{Example 3: Different sizes of defects.}
In this example, we continue to use the two-scale coefficient given in~\eqref{eq:aeps} and examine the effect of varying the size of the well defect. For macroscopic errors, we fix $\Delta t=0.02$ and $h=1/1000$. For microscopic errors, we set $\Delta t=0.02$ and $H=1/100$. First, we consider a smaller defect $K_0=[0.48,0.52]^2$ and $K=[0.47,0.53]^2$ with corresponding areas $|K_0|=0.0016$ and $|K|=0.0036$, substantially smaller than those in  Table \ref{e1wellmac}. We report the macroscopic and microscopic errors in the top rows of Table \ref{tab:e1size}. The results show a noticeable improvement in the $L^2$‑estimates, confirming that the previously observed reduction in convergence rate was indeed caused by the larger defect $K$. Nevertheless, the errors eventually saturate around $10 
^{-4}$, which is mainly attributable to the fixed time step $\Delta t$ shared by both the reference and the numerical solutions.

Next, we consider a larger defect with $K_0=[0.4,0.6]^2$ and $K=[0.39,0.61]^2$, the corresponding areas are $|K_0|=0.04$ and $|K|=0.0484$, much larger than those in Table~\ref{e1wellmac}. Macroscopic and microscopic errors are shown in the bottom rows of 
Table~\ref{tab:e1size}.
First-order convergence is maintained for both $e_1(D\backslash K)$ and $e_1(K_0)$. Although $e_0(K_0)$
initially appears to converge more rapidly, this impression stems mainly from its lower accuracy on coarse grids. Compared with the smaller well defects, the $L^2$-error estimates deteriorate, highlighting the stronger influence of a larger defect region.
\begin{table}[!t]
	\centering
	\caption{Macroscopic and microscopic errors for different sizes of defects.}
	\label{tab:e1size}
	\begin{tabular}{lllll|lllll}
		\hline
	$1/H$   & $e_0(D\backslash K)$ & order & $e_1(D\backslash K)$ & order & $1/h$ &  $e_0(K_0)$ & order & $e_1(K_0)$ & order\\ \hline
	\multicolumn{5}{l|}{Macro error with $K=[0.47,0.53]^2,h=10^{-3}$} & \multicolumn{5}{l}{Micro errors with $K_0=[0.48,0.52]^2,H=10^{-2}$}\\
		20  & 5.61E-03 &~ & 6.81E-02 & ~ & 200 & 1.01E-03 & ~& 2.54E-01      \\
		40  & 1.39E-03 & 2.01 & 3.32E-02 & 1.04 & 400 & 3.79E-04 & 1.41 & 1.62E-01 & 0.65\\ 
		80  & 3.78E-04 & 1.88 & 1.61E-02 & 1.04 & 800 & 1.42E-04 & 1.42 & 8.60E-02 & 0.91\\ 
		160 & 1.42E-04 & 1.41 & 7.57E-03 & 1.09 & 1600 & 1.46E-04 & -0.04 & 4.18E-02 & 1.04\\  
	\hline
	\multicolumn{5}{l|}{Macro error with $K=[0.39,0.61]^2,h=10^{-3}$} & \multicolumn{5}{l}{Micro errors with $K_0=[0.4,0.6]^2,H=10^{-2}$}\\
	20  & 5.53E-03 & ~ & 6.38E-02 & ~ & 200 & 9.59E-03 & ~ & 2.85E-01      \\
		40  & 1.53E-03 & 1.85 & 3.12E-02 & 1.03 & 400 & 3.06E-03 & 1.65 & 1.36E-01 & 1.07\\ 
		80  & 6.29E-04 & 1.28 & 1.52E-02 & 1.04 & 800 & 9.65E-04 & 1.67 & 4.94E-02 &1.46\\ 
		160 & 4.67E-04 & 0.43 & 7.14E-03 & 1.09 & 1600 & 5.87E-04 & 0.72 & 2.51E-02 & 0.98\\
		\hline
	\end{tabular}
\end{table}
\subsection{Example 4: Different shapes of defects.}
In this example, we continue to employ the two-scale coefficient given in~\eqref{eq:aeps} and consider different defect geometries, as shown in Figure~\ref{K0}. For the L-shaped defect $K_0$, the vertices (listed counterclockwise) are 
\[
(0.4, 0.4),\; (0.76, 0.4), \;(0.76, 0.41), \;(0.41, 0.41), \;(0.41, 0.76) \text{\; and\;} (0.4, 0.76). 
\]
The longest edge of $K_0$ measures $0.36$ and the shortest $0.01$, resulting in an aspect ratio of $36$. This configuration can be used to model a thin channel. 

The annular layer between $K$ and $K_0$ has thickness $0.005$,  so the vertices of $K$ are 
\[
(0.395, 0.395), \;(0.765, 0.395), \;(0.765, 0.415),\; (0.415, 0.415), (0.415, 0.765)\text{\; and\;} (0.395, 0.765). 
\]
For $K$, the longest edge is $0.37$ and the shortest $0.02$.

The macroscopic and microscopic errors are reported in the top rows of Table~\ref{tab:e1shape}. Following the setup of Example 1, we fix $\Delta t=0.02$ and $h=1/1000$ for the macroscopic errors, and $\Delta t=0.02$ and $H=1/100$ for the microscopic errors. The areas $|K_0|=0.0071$ and $|K|=0.0144$ are closed to those of the well defect in Example 1, thereby reducing size effects. The defect shape appears to primarily affect the $L^2$-estimates, specifically $e_0(D\backslash K)$ and $e_0(K_0)$, while first-order convergence is still maintained for both $e_1(D\backslash K)$ and $e_1(K_0)$.
\begin{table}[!t]
	\centering
	\caption{Macroscopic and microscopic errors for different shapes of defects.}
	\label{tab:e1shape}
	\begin{tabular}{lllll|lllll}
		\hline
	$1/H$   & $e_0(D\backslash K)$ & order & $e_1(D\backslash K)$ & order & $1/h$ &  $e_0(K_0)$ & order & $e_1(K_0)$ & order\\ \hline
	\multicolumn{5}{l|}{Macro errors with L-shaped defect, $h=10^{-3}$} & \multicolumn{5}{l}{Micro errors with L-shaped defect, $H=10^{-2}$}\\
		20  & 5.27E-03 & ~ & 6.22E-02 & ~ & 200 & 1.96E-03 & ~ & 1.99E-01  & ~       \\
		40  & 1.52E-03 & 1.79 & 3.04E-02 & 1.03 & 400 & 1.33E-03 & 0.56 & 1.65E-01 & 0.27\\ 
		80  & 5.34E-04 & 1.41 & 1.54E-02 & 0.98 & 800 & 9.27E-04 & 0.52 & 7.75E-02 & 1.09\\ 
		160 & 3.76E-04 & 0.51 & 8.12E-03 & 0.92 & 1600 & 8.65E-04 & 0.10 & 3.94E-02 & 0.98\\  
	\hline
	\multicolumn{5}{l|}{Macro errors with porous defect, $h=H/2$} & \multicolumn{5}{l}{Micro errors with porous defect, $H=2h$}\\
	20  & 5.97E-03 & ~ & 6.17E-02 & ~ & 200 & 4.05E-03 & ~ & 3.23E-01   & ~       \\
		40  & 3.53E-03 & 0.76 & 3.07E-02 & 1.01 & 400 & 2.46E-03 & 0.72 & 1.86E-01 & 0.80\\ 
		80  & 2.46E-03 & 0.52 & 1.60E-02 & 0.94 & 800 & 2.39E-03 & 0.04 & 9.58E-02 & 0.96\\ 
		160 & 1.02E-03 & 1.27 & 7.70E-03 & 1.06 & 1600 &  2.00E-03 & 0.26 & 5.35E-02 & 0.84\\  
		\hline
	\end{tabular}
\end{table}

The following example  illustrates the capability of the proposed method to handle multiple defects. The porous defect $K_0$ comprises six ellipses centered at 
\[
(0.2, 0.8), \;(0.2, 0.2),\; (0.4, 0.8),\; (0.5, 0.2),\; (0.7, 0.6),\; (0.9, 0.1). 
\]
The semi-axes in the $x$-and $y$-directions are, respectively, 
\[
(R/2, R),\; (R/2,2R),\; (R,R), \;(2R,R/2), \;(R,3R/2), \;(R,R)
\]
with $R=0.023$. This configuration can be used to model materials with different type of inclusions. The outer set $K$ consists of ellipses with the same centers, but with $R=0.028$. Consequently, $|K_0|\approx 0.00997$ and $|K|\approx 0.01478$. 

Because the porous defect induces very thin elements near the pores and much coarser elements elsewhere, resulting in mesh with poor element, we maintain a fixed ratio between coarse and fine mesh sizes by setting $H=2h$ in this example.  To mitigate the computational cost associated with such meshes, one could employ non-matching grids and enforce interface conditions via Nitsche’s method, as in \cite{mingNitscheHybridMultiscale2022}, but this lies beyond the scope of the present work. We report the macroscopic and microscopic errors in the bottom rows of Table~\ref{tab:e1shape} and observe the performance is consistent with that of the L-shaped defect.
\subsection{Example 5: Coefficient without scale separation}
In previous examples, there is scale separation in the diffusion coefficient. This example is taken from \cite{Abdulle2015AnOH}. The coefficient
$a^{\varepsilon}=\chi_{K_0} \tilde{a}+\left(1-\chi_{K_0}\right) \tilde{a}^{\varepsilon}$, where
\[
	\tilde{a}(x)=3+\frac{1}{7} \sum_{j=0}^4 \sum_{i=1}^j \frac{1}{j+1}
	\cos \left(\left\lfloor 8\left(i x_2-\frac{x_1}{i+1}\right)\right\rfloor
	+\left\lfloor 150 i x_1\right\rfloor+\left\lfloor 150 x_2\right\rfloor\right),
\]
and
\[
	\tilde{a}^{\varepsilon}(x)=2.1+\cos \left(2 \pi x_1 / \varepsilon\right) \cos
	\left(2 \pi x_2 / \varepsilon\right)+\sin (4 x_1^2 x_2^2).
\]
We still consider the well defect with $K_0=[0.45,0.55]^2$ and $K=[0.44,0.56]^2$, as in Example 1, and set $\varepsilon = 0.0063$ for the experiment. The effective matrix $A_H$ is obtained using the online-offline method proposed 
in~\cite{huang_efficient_2020}.

We set $\Delta t=0.02$ and $h=1/1000$ and report the macroscopic errors in the left columns
of Table~\ref{e2wellmac}. The results are similar to those in Example 1. Let $\Delta t=0.02$ and $H=1/100$, the microscopic errors are shown in the right columns of Table~\ref{e2wellmac}. The quantity $e_0(K_0)$ remains nearly constant at approximately $5\times 10^{-4}$, a behavior attributable to the discontinuity of $\tilde{a}$ as in Example 2. Meanwhile, the convergent rate for $e_1(K_0)$ stays below $1$, reflecting the reduced regularity of $\tilde{a}$ caused by the roughness of $u^\varepsilon$.
\begin{table}[!t]
	\centering
	\caption{Macroscopic and microscopic errors for no scale separation coefficient with well defect.}
	\label{e2wellmac}
    \begin{tabular}{lllll|lllll}
		\hline
		$1/H$  & $e_0(D \backslash K)$ & order & $e_1(D \backslash K)$ & order & $1/h$ &  $e_0(K_0)$ & order & $e_1(K_0)$ & order\\ \hline
		\multicolumn{5}{l|}{Macro errors with $h=10^{-3}$} & \multicolumn{5}{l}{Micro errors with $H=10^{-2}$}\\
		20 & 5.15E-03 & ~ & 7.19E-02 & ~ & 200 & 5.02E-04 & ~ & 4.23E-02 & ~\\
        40 & 1.18E-03 & 2.13  & 3.42E-02 & 1.07 & 400 & 5.22E-04 & -0.06  & 3.20E-02 & 0.40 \\ 
        80 & 3.14E-04 & 1.91  & 1.69E-02 & 1.02 & 800 & 5.43E-04 & -0.06  & 1.87E-02 & 0.78 \\ 
        160 & 1.16E-04 & 1.43  & 8.05E-03 & 1.07 & 1600 & 5.51E-04 & -0.02  & 1.08E-02 & 0.79 \\ 
\hline
	\end{tabular}
\end{table}
\section{Conclusion}\label{sec:concl}
In this paper, we presented a concurrent global-local numerical method for solving multiscale parabolic equations in divergence form. The proposed method effectively integrates global and local information to accurately capture both macroscopic and microscopic behaviors of the solution, which has been confirmed both theoretically and experimentally. For multiscale parabolic equations with time-independent coefficient, we successfully eliminate the factor $\Delta t^{-1/2}$ for both macroscopic and microscopic accuracy. It is also interesting to study such problems with time-dependent coefficients, time varying defects and time varying boundary conditions.
\bibliography{Reference}
\end{document}